\newtheorem{thm}{Theorem}[section]
\newtheorem{lem}[thm]{Lemma}
\newtheorem{prop}[thm]{Proposition}
\theoremstyle{definition}
\newtheorem{rmk}[thm]{Remark}
\numberwithin{equation}{section}
\def\de{\delta}
\def\Ga{\Gamma}
\def\R{\mathbb{R}}
\def\C{\mathbb{C}}
\def\N{\mathbb{N}}
\newcommand{\rFs}[5]{\,_{#1}F_{#2} \left( \genfrac{.}{.}{0pt}{}{#3}{#4};#5 \right)}
\newcommand{\SU}{\mathrm{SU}}
\newcommand{\U}{\mathrm{U}}
\newcommand{\D}{\displaystyle}
\title[]{Some bivariate stochastic models arising \\ from group representation theory}
\author{Manuel D. de la Iglesia}
\address{Manuel D. de la Iglesia\\
Instituto de Matem\'aticas, Universidad Nacional Aut\'onoma de M\'exico, Circuito Exterior, C.U., 04510, Ciudad de México, México.}
\email{mdi29@im.unam.mx}
\author{Pablo Rom\'an}
\address{Pablo Rom\'an, CIEM,
FaMAF, Universidad Nacional de C\'ordoba, Medina Allende s/n Ciudad
Universitaria, C\'ordoba, Argentina}
\email{roman@famaf.unc.edu.ar}
\date{\today}
\thanks{The work of the first author is partially supported by PAPIIT-DGAPA-UNAM grant IA100515 (México), UC MEXUS-CONACYT grant CN-16-84 and MTM2015-65888-C4-1-P (Ministerio de Economía y Competitividad, Spain), while the work of the second author is supported by the Radboud Excellence Fellowship, CONICET grant PIP 112-200801-01533, FONCyT grant PICT 2014-3452 and by SeCyT-UNC.
}
\date{\today}
\subjclass[2010]{60J10, 60J60, 33C45, 42C05}
\keywords{Quasi-birth-and-death processes. Switching diffusions. Matrix-valued orthogonal polynomials. Wright-Fisher models}
\begin{document}

\maketitle

\begin{abstract}
The aim of this paper is to study some continuous-time bivariate Markov processes arising from group representation theory. The first component (level) can be either discrete (quasi-birth-and-death processes) or continuous (switching diffusion processes), while the second component (phase) will always be discrete and finite. The infinitesimal operators of these processes will be now matrix-valued (either a block tridiagonal matrix or a matrix-valued second-order differential operator). The matrix-valued spherical functions associated to the compact symmetric pair $(\SU(2)\times \SU(2), \mathrm{diag} \, \SU(2))$ will be eigenfunctions of these infinitesimal operators, so we can perform spectral analysis and study directly some probabilistic aspects of these processes. Among the models we study there will be rational extensions of the one-server queue and Wright-Fisher models involving only mutation effects.
\end{abstract}

\section{Introduction}

It is very well known that many important results of one-dimensional stochastic processes can be obtained by using spectral methods. In particular, for Markov processes, many probabilistic aspects can be analyzed in terms of the (orthogonal) eigenfunctions and eigenvalues of the infinitesimal operator associated with the Markov process. In a series of papers in 1950-1960, S. Karlin and J. McGregor studied random walks and birth-and-death processes by using orthogonal polynomials (see \cite{KMc2}--\cite{KMc6}). Since the one-step transition probability matrix of the random walk or the infinitesimal operator of the birth-and-death process are tridiagonal matrices, it is possible to apply the spectral theorem to find the corresponding Borel measure associated with the process. With this measure it is easier to study the transition probabilities, the invariant measure or the behavior of the states of the process. Many other authors like M. Ismail, G. Valent, H. Dette, D. P. Maki or E. van Doorn, to mention a few, have studied this connection and other probabilistic aspects (see e.g. \cite{Dette, ILMV, Mak,vD2,vD3}). As for diffusion processes, it is also possible to use spectral methods, but now applied to second-order differential operators. Many authors like H. McKean, J. F. Barrett, D. G. Lampard, E. Wong or more recently D. Bakry, O. Mazet and B. Griffiths have studied this connection (see e.g. \cite{BW, BM, BL, Gri, IMcK, KT2, McK, WT}). Prominent examples are the Orstein-Uhlenbeck process, population growth models or Wright-Fisher models. For a brief account of the subject and other relations between stochastic processes and orthogonal polynomials, see \cite{S}.

A natural extension in this direction are bivariate Markov processes with discrete and finite second component. Now the state space is two-dimensional of the form $\mathcal{S}\times\{1,2,\ldots,N\}$, where $\mathcal{S}\subseteq\mathbb{R}$ is either a discrete set or a continuous interval, and $N$ is a positive integer. The first component is usually called the \emph{level}, while the second one is called the \emph{phase}. If $\mathcal{S}$ is discrete these processes are typically called \emph{quasi-birth-and-death processes} (see \cite{LaR, Neu}), while if $\mathcal{S}$ is a continuous real interval, they are called \emph{switching diffusion processes} (see \cite{MY, YZ}). They key point to study spectral methods of these processes will be the theory of matrix-valued orthogonal polynomials. In the last few years many progresses have been made in this direction. For discrete-time quasi-birth-and-death processes the extension of the Karlin-McGregor formula was given independently in \cite{DRSZ, G2}, while for continuous-time in \cite{DR}. For switching diffusion processes see \cite{dI2}.

A natural source of examples comes from group representation theory. There is a close relationship between special functions and harmonic analysis on groups that has been worked out for various classes of groups. E. Cartan and H. Weyl linked the classical theory of spherical harmonics with that of group representations showing that spherical harmonics arise naturally from the study of functions on the $n$-dimensional sphere $S^n = \mathrm{SO}(n + 1)/\mathrm{SO}(n)$. More generally, it is well known that the zonal spherical functions associated to real compact symmetric spaces can be realized as Jacobi polynomials. The link between zonal spherical functions and orthogonal polynomials has a matrix-valued analogue that was first investigated in \cite{GPT1} for the compact symmetric pair $(G,K)=(\SU(3),\U(2))$. The matrix-valued spherical functions are related to an auxiliary function which is an eigenfunction of a matrix-valued differential operator related to the Casimir operator of the group $G$ and that is given explicitly. A probabilistic interpretation for this case is given in \cite{GdI2} and is extended in \cite{GPT3}. An alternative approach to relate matrix-valued spherical functions and matrix-valued orthogonal polynomials is given in \cite{KvPR, KvPR2, HvP, vPR}, where more general families of symmetric pairs $(G,K)$ are treated. In this construction, one obtains a family of matrix-valued functions $\Psi_n$, together  with a matrix-valued differential operator $\Omega$, for which the functions $\Psi_n$ are eigenfunctions.
The first of these functions, $\Psi_0$, turns out to be invertible, and the sequence $P_n=\Psi_n\Psi_0^{-1}$ is a sequence of matrix-valued orthogonal polynomials with respect to an appropriate weight function which are eigenfunctions of a matrix-valued hypergeometric operator as in \cite{TiraPNAS}. 

The bispectral property of these examples will give us naturally a block tridiagonal Jacobi matrix (or a three-term recurrence relation) and a matrix-valued second-order differential operator, along with their eigenfunctions and eigenvalues. After appropriate conjugations it will be possible to transform these operators into infinitesimal operators of bivariate Markov processes. From the block tridiagonal Jacobi matrix we will get the infinitesimal operator of a continuous-time level-dependent quasi-birth-and-death process, while from the matrix-valued second-order differential operator we will get a switching diffusion process. The structure of the group will divide both processes into two independent processes, which will be studied in detail. For simplicity, we will focus on the lowest dimensional cases.

The structure of the paper goes as follows. In Section \ref{SEC2} we will give a brief account of matrix-valued spherical functions, focusing on the example for the pair $(G,K)=(\SU(2)\times \SU(2), \mathrm{diag} \, \SU(2))$ studied in \cite{KvPR,KvPR2} and the one-parameter extension given in \cite{KdlRR, vPR}. The second-order differential operator, three-term recurrence relation, weight matrix, norms and other structural formulas to transform the operators into operators with stochastic interpretation will be given. The reader interested exclusively in the stochastic models could skip this section and go directly to Sections \ref{SEC3} and \ref{SEC4}. In Section \ref{SEC3} we will study in detail the $3\times3$ case and we will use the spectral analysis to study several probabilistic aspects. From the block tridiagonal Jacobi matrix we will get two birth-and-death models. The first one is a regular birth-and-death process, while the second one is a continuous-time quasi-birth-and-death process with two phases (there are very few examples in the literature in this direction). Both can be viewed as rational extensions of the one-server queue with one free parameter. From the second-order differential operator we will get two diffusion models. The first one is a regular diffusion process with killing, while the second one is a switching diffusion process with two phases. Both can be viewed as extensions of the Wright-Fisher model involving only mutation effects. Finally, in Section \ref{SEC4} we will give some remarks about the $5\times5$ case, especially for the second-order differential operator. In this case we will get two models, a switching diffusion process with three phases, and a switching diffusion process with two phases \emph{with killing}. The spectral analysis of this last process appears to be new.

\section{Spherical functions and differential operators}\label{SEC2}

In this section $E_{ij}$ will denote the matrix with 1 at the entry $(i,j)$ and 0 elsewhere ($i,j\geq0$). Additionally we will use the following $N\times N$ diagonal matrices
\begin{equation}\label{JJ}
J=\sum_{i=0}^{N-1}(N-1-i)E_{ii},\quad \breve J=(N-1)I-J=\sum_{i=0}^{N-1}iE_{ii},
\end{equation}
and the nilpotent matrix of order $N$
\begin{equation}\label{AA}
A=\sum_{i=0}^{N-2}E_{i,i+1}.
\end{equation}
For any matrix $M\in\mathbb{C}^{N\times N}$, $M^*$ will denote the conjugate transpose of $M$. Also $I_N$ will denote, as usual, the identity matrix of dimension $N\times N$.

\subsection{Matrix-valued spherical functions}
Here we discuss the family of matrix-valued spherical functions given in \cite{KvPR,KvPR2} for the pair $(G,K)=(\SU(2)\times \SU(2), \mathrm{diag} \, \SU(2))$ and the one-parameter extension \cite{KdlRR, vPR}. For each $\ell\in \mathbb{N}$, if we let $N=2\ell+1$, it was shown in \cite{KvPR,KvPR2} that
there exists a family of $\mathbb{C}^{N\times N}$-valued functions $\{\Psi_n: n\in \mathbb{N}_0\}$, defined on the interval $[0,1]$. The family is constructed
by means of the spherical functions associated to $(G,K)$. All the properties of the spherical functions, like e.g. orthogonality relations, being eigenfunctions of differential operators, can be translated into properties of the functions $\Psi_n$. This family has a one parameter extension  $\{\Psi_n^{(\nu)}\}_{n\geq0}$ given in \cite{KdlRR,vPR}. The functions $\Psi_n^{(\nu)}$ satisfy the matrix-valued differential equation
\begin{equation}
\label{eqOmg}
\Omega^{(\nu)}  \Psi_n^{(\nu)}(y)= y(1-y)\frac{d^2 \Psi_n^{(\nu)}(y)}{dy^2} + a^{(\nu)}(y) \frac{d \Psi_n^{(\nu)}(y)}{dy} + F^{(\nu)}(y)\Psi_n^{(\nu)}(y) = \Psi_n^{(\nu)}(y)\,\Lambda_n^{(\nu)},
\end{equation}
where $a^{(\nu)}(y)=1/2+\nu-y(2\nu+1)$ and 
\begin{align}
\label{eq:Fnu_SU2}
F^{(\nu)}(y)&=\ell(\ell+2)-(\nu-1)(2\ell+\nu+1)-\frac{1}{2y(1-y)}\left[\ell(\nu-1)(1-2y)^2+\ell+J\breve{J}\right] \\
\nonumber&\qquad+J\breve{J}+\frac{1-2y}{4y(1-y)} \left(\breve{J}A^*(J+\nu-1)+JA(\breve{J}+\nu-1)\right),
\end{align}
where $J, \breve{J}$ and $A$ are given by \eqref{JJ} and \eqref{AA}.
%
%
%
%F(y)-(\nu-1)(2\ell+\nu+1)+\frac{(\nu-1)(1-2y)}{4y(1-y)}\left(M-\ell+2 \ell y\right).
%\end{equation}
%Here $M$ is the tridiagonal matrix
%$$M= \sum_{i=1}^{2\ell} i E_{i,i-1} + \sum_{i=0}^{2\ell-1} (2\ell-i) E_{i,i+1},$$
%and
%\begin{multline*}
%	F(y)=\sum_{i=0}^{2\ell} \frac{2y(1-y)(\ell(\ell+2)-i^2+2\ell i)-\ell(2i+1)+i^2}{2y(1-y)}\, E_{i,i} \\
%	+\sum_{i=1}^{2\ell}\frac{i(2\ell-i+1)(1-2y)}{4y(1-y)} \, E_{i,i-1} +\sum_{i=0}^{2\ell-1} \frac{(i+1)(2\ell-i)(1-2y)}{4y(1-y)} E_{i,i+1}.
%\end{multline*}

\subsection{Matrix-valued orthogonal polynomials}
Matrix-valued spherical functions are closely related to matrix-valued orthogonal polynomials. In fact we have
$$
\Psi^{(\nu)}_n(y)=\left[\Psi_0^*(y)P^{(\nu)}_n(y)\right]^*,
$$
where $\Psi_0(y)$ is independent of $\nu$ and $(P_n^{(\nu)})_n$ is a family of monic matrix-valued orthogonal polynomials satisfying
\begin{equation}\label{defmonic}
\begin{split}
&\qquad \qquad \int_{0}^1  P_n^{(\nu)}(y) \, W^{(\nu)}(y)\,\bigl(P_m^{(\nu)}(y)\bigl)^\ast \, dy \, = \, \de_{nm} \|P_n^{(\nu)}\|^2_{W^{(\nu)}},
\end{split}
\end{equation}
where $\|P_n^{(\nu)}\|^2_{W^{(\nu)}}$ is the matrix-valued norm of the monic polynomial $P_n^{(\nu)}$ and it is given by the diagonal matrix with entries
\begin{gather}\label{NormsG}
\bigl( \|P_n^{(\nu)}\|^2_{W^{(\nu)}}\bigr)_{k,k} = \frac{\sqrt{\pi}}{2\cdot4^n}\,  \frac{\Ga(\nu+1/2)}{\Ga(\nu+1)}
\frac{\nu(2\ell+\nu+n)}{\nu+n} \frac{k!\, (2\ell-k)!\, (n+\nu+1)_{2\ell}}{(2\ell)!\, (n+\nu+1)_k (n+\nu+1)_{2\ell-k}} \\
\nonumber\qquad\qquad\qquad\times \frac{n!\, (\ell+1/2+\nu)_n (2\ell+\nu)_n(\ell+\nu)_n}{(2\ell+\nu+1)_n(\nu+k)_n(2\ell+2\nu+n)_n(2\ell+\nu-k)_n},
\end{gather}
and the weight matrix is given by
\begin{align}
\label{WWnu}W^{(\nu)}(y)&=\frac{4^{\nu-\ell}(\nu+\ell)_{\ell+1}}{2(\nu+1/2)_\ell}\, [y(1-y)]^{\nu-1/2} \, \left(\Psi_0(y)\right)^\ast T^{(\nu)} \Psi_0(y),\\
\nonumber T^{(\nu)}_{ij}&=\delta_{ij} \, \binom{2\ell}{i} \frac{(\nu)_i}{(\nu+2\ell-i)_i}.
\end{align}
Observe that the diagonal entries of $T^{(\nu)}_{ij}$ correspond up to a constant to the nodes of the beta-binomial distribution ($\alpha=\beta=\nu$). Note also that the $\nu$-dependence on the weight matrix is only located in the scalar weight  $\left[y(1-y)\right]^{\nu-1/2}$ and the constant diagonal matrix $T^{(\nu)}$. The function $\Psi_0(y)$ is the building block of the orthogonality measure and has been calculated explicitly in \cite{KvPR}. A nice compact formula for $\Psi_0(y)$ is given in \cite{vPR}. Let $K$ be the constant matrix with entries
$$K_{i,j}=K_j(i)=K_j(i,1/2,2\ell),$$
where $K_n(x,p,N)$ are the Krawtchouk polynomials, see e.g. \cite[\S 1.10]{KoekS}. Then we have
\begin{equation}
\label{def:Phi_0}
	\Psi_0(y)=K M \Upsilon(y)K^*,
\end{equation}
where $\Upsilon, M$ are the diagonal matrices
$$\Upsilon(y)_{jj} = (-1)^{\frac{3j}{2}} y^{\frac{j}{2}}(1-y)^{\frac{2\ell-j}{2}} ,\qquad M_{jj}= \binom{2\ell}{j}.$$	

%More precisely, if we set 
% $\Psi_d(y)=\Phi_d(\arccos(1-2y))$, then we have 
%\begin{multline}
%	\label{def:Phi_0}
%	(\Phi_0(t))_{n,m}=
%	\sum_{j_1=-\frac{n}{2}}^{\frac{n}{2}} \, \sum_{j_2=-\frac{2\ell-n}{2}}^{\frac{2\ell-n}{2}} \delta_{-\ell+m,j_1+j_2}
%	\binom{n}{j_1+\frac{n}{2}}\binom{2\ell-n}{j_2+\frac{(2\ell-n)}{2}}\binom{2\ell}{2\ell-m}^{-1} \,e^{i(j_2-j_1)t}.
%\end{multline}
Since the spherical functions $\Psi_n^{(\nu)}$ are eigenfunctions of $\Omega^{(\nu)}$, the matrix-valued
orthogonal polynomials $(P_n^{(\nu)})^*$ are eigenfunctions of the differential operator $ \Psi_0^{-1}\Omega^{(\nu)} \Psi_0$ which is explicitly given by
\begin{equation}\label{diffop}
D^{(\nu)}=y(1-y)\partial_y^2+(C+\nu-y(2\ell+2\nu+1))\partial_y+V+(\nu-1)(2\ell+\nu+1),\quad\partial_y=\frac{d}{dy},
\end{equation}
where
$$C=\frac{2\ell+1}{2}-\frac{1}{2}(A^\ast J+A\breve{J}),\qquad V=J\breve{J},
$$
and $J, \breve{J}$ and $A$ are given by \eqref{JJ} and \eqref{AA}.
%\begin{gather*}
%C=-\sum_{i=1}^{2\ell} \frac{(2\ell-i+1)}{2} E_{i,i-1}
%+\sum_{i=0}^{2\ell} \frac{(2\ell+1)}{2}E_{ii} - \sum_{i=0}^{2\ell-1} \frac{i+1}{2}E_{i,i+1}, V= -\sum_{i=0}^{2\ell} i(2\ell-i) E_{i,i}.
%\end{gather*}
Moreover, the operator $D^{(\nu)}$ is symmetric with respect to $W^{(\nu)}$. The eigenvalue for $D^{(\nu)}$ (and $\Omega^{(\nu)}$ in \eqref{eqOmg}) is
\begin{equation}\label{eigvalue}
\Lambda^{(\nu)}_n=-n(n-1)-n(2\ell+2\nu+1)+V+(\nu-1)(2\ell+\nu+1).
\end{equation}
%Additionally, for the monic family $P_n^{(\nu)}(x)$, the following Rodrigues formula holds:
%\begin{gather*}
%P^{(\nu)}_n(x) =  G_n^{(\nu)}\frac{d^n}{dx^n}\bigl( W^{(\nu+n)}(x)\bigr) W^{(\nu)}(x)^{-1} \\
%\bigl(G_n^{(\nu)}\bigr)_{j,k} = \de_{j,k} \frac{(-1)^n (\nu)_n (\ell+\nu+\frac12)_n (\ell+\nu)_n (2\ell+\nu)_n}
%{(\nu+\frac12)_n(\nu+k)_n(2\ell+\nu+1)_n(2\ell+2\nu+n)_n(2\ell+\nu-k)_n}.
%\end{gather*}

Additionally the monic matrix-valued orthogonal polynomials $P_n^{(\nu)}$ satisfy a three-term recurrence relation of the form
\begin{equation}\label{TTRRPn}
yP_n^{(\nu)}(y)=P_{n+1}^{(\nu)}(y)+B_n^{(\nu)}P_n^{(\nu)}(y)+C_n^{(\nu)}P_{n-1}^{(\nu)}(y),\quad n\geq1,
\end{equation}
where the coefficients $B_n^{(\nu)}$ and $C_n^{(\nu)}$ are given by
\begin{align*}
B_n^{(\nu)}&=\frac{1}{2}-\frac{1}{4}J(J+\nu-1)\left[(J+n+\nu-1)(J+n+\nu)\right]^{-1}A\\
&\qquad-\frac{1}{4}\breve{J}(\breve{J}+\nu-1)\left[(\breve{J}+n+\nu-1)(\breve{J}+n+\nu)\right]^{-1}A^*,\quad n\geq0,
\end{align*}
and
\begin{align*}
C_n^{(\nu)}&=\frac{n(n+\nu-1)(2\ell+n+\nu)(2\ell+n+2\nu-1)}{16}\times\\
&\qquad\times\left[(J+n+\nu-1)(J+n+\nu)(\breve{J}+n+\nu-1)(\breve{J}+n+\nu)\right]^{-1},\quad n\geq1,
\end{align*}
where $J, \breve{J}$ and $A$ are given by \eqref{JJ} and \eqref{AA}.

\subsection{The function $S$}
In this subsection we turn the differential operator $\Omega^{(\nu)}$ into a differential operator which has a form that allows for a probabilistic interpretation by conjugating with a matrix-valued function. The appropriate function is given by  a diagonal matrix whose diagonal entries are those  of the $\ell$-th column of $\Psi_0(y)$. We assume $\ell\in \mathbb{N}$, so that $2\ell+1$ is odd. The $\ell$-th column (and the $\ell$-th row) of the matrix $\Psi_0(y)$  is a polynomial in $y$. More precisely from \eqref{def:Phi_0}, it is given explicitly by
\begin{align}
(\Psi_0)_{k,\ell} &=\sum_{h=0}^{2\ell} \binom{2\ell}{h}K_k(h)K_\ell(h)(-1)^{\frac{3h}{2}} y^{\frac{h}{2}}(1-y)^{\frac{2\ell-h}{2}} \nonumber\\
&=\sum_{h=0}^{\ell} (-1)^h\binom{2\ell}{2h}K_{2h}(k)K_{2h}(\ell)y^h(1-y)^{\ell-h} \nonumber\\
\label{eq:Psi0_k_y}
&=\sum_{h=0}^{\ell}\left[(-1)^h\binom{\ell}{h}\sum_{j=0}^{h}(-1)^j\binom{h}{j}K_{2j}(k)\right]y^h,\qquad 0\leq k \leq 2\ell,
\end{align}
Here we are using that $K_h(\ell)=0$ if $h$ is odd and $h\leq 2\ell-1$, the binomial theorem and the identities
$$
K_{2j}(\ell)=(-1)^j\binom{\ell}{j}\binom{2\ell}{2j}^{-1},\quad \binom{\ell}{j}\binom{\ell-j}{h-j}=\binom{\ell}{h}\binom{h}{j}.
$$
\begin{lem}
\label{lem:coef_psi}
We have
\begin{equation}
\label{eq:coef_ch}
\sum_{j=0}^{h}(-1)^j\binom{h}{j}K_{2j}(k)=\frac{(-k)_h(-2\ell+k)_h}{(-\ell)_h(-\ell+1/2)_h}.
\end{equation}
\end{lem}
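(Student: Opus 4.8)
The plan is to reduce the left-hand side of \eqref{eq:coef_ch} to a single ${}_2F_1$ evaluated at $1$ and then invoke the Chu--Vandermonde summation. First I would write each Krawtchouk polynomial through its hypergeometric representation,
$$
K_{2j}(k)=K_{2j}(k,1/2,2\ell)=\rFs{2}{1}{-2j,\,-k}{-2\ell}{2}=\sum_{m\ge0}\frac{(-2j)_m(-k)_m}{(-2\ell)_m\,m!}\,2^m ,
$$
a terminating sum because $0\le k\le 2\ell$, with no vanishing denominator since then $m\le k\le 2\ell$. Substituting this into the left-hand side and interchanging the two finite sums turns the claim into
$$
\sum_{j=0}^{h}(-1)^j\binom{h}{j}K_{2j}(k)=\sum_{m\ge0}\frac{(-k)_m\,2^m}{(-2\ell)_m\,m!}\,T(m,h),\qquad T(m,h):=\sum_{j=0}^{h}(-1)^j\binom{h}{j}(-2j)_m .
$$

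The heart of the argument is the evaluation of the coefficient $T(m,h)$, which I would obtain from its exponential generating function in $m$. Using $\sum_{m\ge0}(-2j)_m\,x^m/m!=(1-x)^{2j}$ one gets $\sum_{m\ge0}T(m,h)\,x^m/m!=\sum_{j=0}^{h}(-1)^j\binom hj(1-x)^{2j}=\bigl(1-(1-x)^2\bigr)^h=x^h(2-x)^h$. Reading off the coefficients shows $T(m,h)=0$ unless $h\le m\le 2h$, and $T(h+i,h)=(-1)^i\binom{h}{i}2^{h-i}(h+i)!$ for $0\le i\le h$.

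Plugging this back in, the factor $(h+i)!$ cancels the $m!$; splitting Pochhammers as $(-k)_{h+i}=(-k)_h(h-k)_i$ and $(-2\ell)_{h+i}=(-2\ell)_h(h-2\ell)_i$ and using $(-1)^i\binom hi=(-h)_i/i!$ yields
$$
\sum_{j=0}^{h}(-1)^j\binom hj K_{2j}(k)=4^h\,\frac{(-k)_h}{(-2\ell)_h}\sum_{i=0}^{h}\frac{(-h)_i\,(h-k)_i}{(h-2\ell)_i\,i!}=4^h\,\frac{(-k)_h}{(-2\ell)_h}\,\rFs{2}{1}{-h,\,h-k}{h-2\ell}{1}.
$$
Now the Chu--Vandermonde identity $\rFs{2}{1}{-h,\,b}{c}{1}=(c-b)_h/(c)_h$ applies with $b=h-k$, $c=h-2\ell$ (admissible because $0\le h\le\ell$ forces $(h-2\ell)_h\neq0$), giving the factor $(-2\ell+k)_h/(h-2\ell)_h$. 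Hence the left-hand side equals $4^h(-k)_h(-2\ell+k)_h/\bigl((-2\ell)_h(h-2\ell)_h\bigr)=4^h(-k)_h(-2\ell+k)_h/(-2\ell)_{2h}$, and the duplication formula $(-2\ell)_{2h}=4^h(-\ell)_h(-\ell+1/2)_h$ turns this into the right-hand side of \eqref{eq:coef_ch}.

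The only genuinely delicate step is the closed form for $T(m,h)$; everything else is bookkeeping with Pochhammer symbols. One should also check the admissibility of each manipulation (termination of the hypergeometric series, non-vanishing of the denominators $(-2\ell)_m$ and $(h-2\ell)_h$), but all of this is guaranteed by the standing hypotheses $\ell\in\N$, $0\le h\le\ell$ and $0\le k\le 2\ell$.
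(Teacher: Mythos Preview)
Your proof is correct and follows essentially the same route as the paper: expand $K_{2j}(k)$ hypergeometrically, interchange the two finite sums, evaluate the inner $j$-sum in closed form, and then collapse the remaining single sum by a Vandermonde-type identity. The only differences are cosmetic. You justify the inner sum $T(m,h)$ via its exponential generating function, whereas the paper simply states the equivalent binomial identity $\sum_{j}(-1)^j\binom{h}{j}\binom{2j}{i}=(-1)^h2^{2h-i}\binom{h}{i-h}$ (this is your formula after writing $(-2j)_m=(-1)^m m!\binom{2j}{m}$); and you finish with the hypergeometric Chu--Vandermonde plus the Pochhammer duplication $(-2\ell)_{2h}=4^h(-\ell)_h(-\ell+1/2)_h$, while the paper recasts the outer sum in binomial form and quotes the Vandermonde convolution from \cite{Gra}. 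These are two phrasings of the same identity, so the arguments are substantively the same, with your version slightly more self-contained.
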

\begin{proof}
First we rewrite the Krawtchouk polynomial $K_{2j}(k)=K_k(2j)$ as in \cite[Formula (1.10.1)]{KoekS} and invert the order of summation. We obtain
\begin{align}
\label{eq:coefficient_S}
\sum_{j=0}^{h}(-1)^j\binom{h}{j}K_{2j}(k) =\sum_{i=0}^k \frac{(-1)^i 2^i(-k)_i}{(-2\ell)_i} \, \sum_{j=0}^h (-1)^j\binom{h}{j}\binom{2j}{i}.
\end{align}
The inner sum is given explicitly by
$$\sum_{j=0}^h (-1)^j\binom{h}{j}\binom{2j}{i}=(-1)^h2^{2h-i}\binom{h}{i-h},$$
so that \eqref{eq:coefficient_S} becomes, using $(-k)_i/(-2\ell)_i=\binom{2\ell-i}{2\ell-i}\binom{2\ell}{k}^{-1}$, the following expression
$$
(-1)^h2^{2h}\binom{2\ell}{k}^{-1} \sum_{i=0}^k (-1)^i\binom{2\ell-i}{2\ell-i}\binom{h}{i-h}=(-1)^{h+k}2^{2h}\binom{2\ell}{k}^{-1}\binom{h+k-2\ell-1}{k-h}.
$$
The last sum can be evaluated explicitly using \cite[Formula (5.25)]{Gra}. Finally, a straightforward computation shows that the last expression is exactly the same as the one given on the right hand side of  \eqref{eq:coef_ch}.
\end{proof}

Now we construct a diagonal matrix $S(y)$ with the entries of the $\ell$-th column of $\Psi_0(y)$ as diagonal entries. Then we have
\begin{equation}\label{SS}
S(y)=\sum_{i=0}^{2\ell} \, (\Psi_0)_{i,\ell} \, E_{ii}.
\end{equation}
%Next we need to describe the zeros of $(\Psi_0)_{k,\ell}$ in the interval $[0,1]$. For this we rewrite \eqref{eq:Psi0_k_y} in the basis $\{(1-2y)^j\}$:
%\begin{align*} 
%(\Psi_0)_{k,\ell} &=\sum_{j=0}^k \left[ \sum_{h=j}^k (-1)^j  \binom{\ell}{j}\, \frac{c_h(k)}{2^h}\right] (1-2y)^j\\
%&=
% \sum_{j=0}^k \frac{(-1)^j}{j!} \, \left[ \sum_{s=0}^{k-j}  \frac{ 2^{-s-j} (-k)_{s+j} (-2\ell+k)_{s+j}}{s!(-\ell+1/2)_{s+j}} 
%\right] (1-2y)^j\\
%\end{align*}
%%$$c_{i,k}=\frac{(-1)^k (-\ell)_{i+k} ( -\ell-k)_i}{ i! (-\ell+1/2)_{i} (\ell-k+1)_k}.$
\begin{lem}
\label{lem:SS}
For all $k=0,\ldots, 2\ell$, we have
$$\frac{S(y)_{k+1,k+1}}{S(y)_{k,k}}\geq 0, \quad \text{for }y\in[0,1/2), \qquad \qquad \frac{S(y)_{k+1,k+1}}{S(y)_{k,k}}\leq 0, \quad \text{for }y\in(1/2,1].$$ 
\end{lem}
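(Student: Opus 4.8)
The plan is to identify the diagonal entry $S(y)_{k,k}=(\Psi_0)_{k,\ell}$ with a Gegenbauer polynomial and then read off its sign from an explicit expansion which, for this particular parameter, is especially simple. Substituting Lemma~\ref{lem:coef_psi} into the last line of \eqref{eq:Psi0_k_y} and using $(-1)^h\binom{\ell}{h}=(-\ell)_h/h!$, the factors $(-\ell)_h$ cancel and I obtain
$$
S(y)_{k,k}=(\Psi_0)_{k,\ell}=\sum_{h=0}^{\ell}\frac{(-k)_h\,(k-2\ell)_h}{(1/2-\ell)_h\,h!}\,y^h=\rFs{2}{1}{-k,k-2\ell}{1/2-\ell}{y},
$$
the sum terminating at $h=\min(k,2\ell-k)$. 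Comparing with the hypergeometric form $C_n^{(\lambda)}(x)=\frac{(2\lambda)_n}{n!}\rFs{2}{1}{-n,n+2\lambda}{\lambda+1/2}{\frac{1-x}{2}}$ at $\lambda=-\ell$, $n=k$, $x=1-2y$ --- which is legitimate because $0\le k\le 2\ell$ forces $(-2\ell)_k\neq 0$ --- this gives
$$
S(y)_{k,k}=\frac{k!}{(-2\ell)_k}\,C_k^{(-\ell)}(1-2y)=\frac{(-1)^k}{\binom{2\ell}{k}}\,C_k^{(-\ell)}(1-2y).
$$

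The next step is to exploit that $\lambda=-\ell$ is a negative integer. In the standard expansion $C_k^{(\lambda)}(x)=\sum_{j\ge 0}\frac{(-1)^j(\lambda)_{k-j}}{j!\,(k-2j)!}(2x)^{k-2j}$, the coefficient $(\lambda)_{k-j}$ vanishes unless $k-j\le\ell$, and for the surviving indices $(-\ell)_{k-j}=(-1)^{k-j}\ell!/(\ell-k+j)!$, so every nonzero term carries the overall sign $(-1)^{k}$. Reindexing by $i=k-2j$ one gets, for $0\le k\le 2\ell$,
$$
(-1)^k\,C_k^{(-\ell)}(x)=\sum_{i}\binom{\ell}{i}\binom{\ell-i}{(k-i)/2}(2x)^i,
$$
the sum over $i$ with $0\le i\le\min(k,2\ell-k)$ and $i\equiv k\pmod 2$, a nonempty set (binomials with non-integral or out-of-range lower entry being zero). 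Hence $\binom{2\ell}{k}\,S(y)_{k,k}=(-1)^k C_k^{(-\ell)}(1-2y)$ is a polynomial in $1-2y$ with nonnegative coefficients, not identically zero, involving only powers of $1-2y$ of the parity of $k$.

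It remains to conclude. For $y\in[0,1/2)$ we have $1-2y>0$, so every term in the last display is $\ge 0$ with at least one $>0$; therefore $S(y)_{k,k}>0$, in particular $S(y)_{k,k}$ and $S(y)_{k+1,k+1}$ are both positive and their quotient is $\geq 0$. For $y\in(1/2,1]$ we have $1-2y<0$, and since only powers of the parity of $k$ occur, $\sgn S(y)_{k,k}=(-1)^k$ (equivalently, use the parity relation $C_k^{(-\ell)}(-x)=(-1)^k C_k^{(-\ell)}(x)$); hence the quotient has sign $(-1)^k(-1)^{k+1}=-1$. The only genuine work here is the collapse of the Gegenbauer series at $\lambda=-\ell$ --- checking which terms survive and that they all share the sign $(-1)^k$ for $x>0$; everything else is bookkeeping of Pochhammer signs.
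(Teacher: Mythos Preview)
Your proof is correct and reaches the same endpoint as the paper's --- namely that $\binom{2\ell}{k}S(y)_{k,k}$ is a polynomial in $1-2y$ with nonnegative coefficients involving only powers of the parity of $k$ --- but your route to that endpoint is more direct. The paper changes to the basis $\{(1-2y)^j\}$ and then computes each coefficient $\alpha_{k,j}$ separately: it rewrites $\alpha_{k,j}$ as a terminating $\rFs{2}{1}{-k+j,-2\ell+k+j}{-\ell+j+1/2}{1/2}$, identifies this as a Gegenbauer value $C^{(-\ell+j)}_{k-j}(0)$, and invokes the explicit formula for Gegenbauer polynomials at the origin to obtain the parity condition and nonnegativity. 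You instead recognize $S(y)_{k,k}$ \emph{globally} as a single Gegenbauer polynomial $\tfrac{(-1)^k}{\binom{2\ell}{k}}C_k^{(-\ell)}(1-2y)$ and read the expansion off from the standard monomial series for $C_k^{(\lambda)}$; the collapse of $(\lambda)_{k-j}=(-\ell)_{k-j}$ for $k-j>\ell$ does in one stroke what the paper's Gegenbauer-at-zero evaluation does coefficient by coefficient. Both arguments rest on the same underlying Gegenbauer identity, but yours packages it more efficiently and avoids the intermediate case analysis on $\alpha_{k,j}$.
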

\begin{proof}
First we rewrite the entries of $S$ in the basis $\{(1-2y)^j\}$. It follows from \eqref{eq:Psi0_k_y} and Lemma \ref{lem:coef_psi} that
\begin{align*} 
S(y)_{k,k}=(\Psi_0)_{k,\ell} &=\sum_{j=0}^k \alpha_{k,j} (1-2y)^j,\qquad \alpha_{k,j}=\frac{(-1)^j}{j!}\sum_{s=0}^{k-j}  \frac{ 2^{-s-j} (-k)_{s+j} (-2\ell+k)_{s+j}}{s!(-\ell+1/2)_{s+j}}.
\end{align*}
Note that
\begin{align*}
\alpha_{k,j}&=\frac{(-1)^j 2^{-j} (-k)_j(-2\ell+k)_j}{j! (-\ell+1/2)_j} \sum_{s=0}^{k-j} \frac{(-k+j)_s (-2\ell+k+j)_s}{s!(-\ell+j+1/2)_s} \, 2^{-s},\displaybreak[0]\\
&=\frac{(-1)^j 2^{-j} (-k)_j(-2\ell+k)_j}{j! (-\ell+1/2)_j} \rFs{2}{1}{-k+j,-2\ell+k+j}{-\ell+j+1/2}{1/2},\\
&=\frac{(-1)^j 2^{-j} (-k)_j(-2\ell+k)_j}{j! (-\ell+1/2)_j} \frac{(k-j)!}{(-2\ell+2j)_{k-j}} \, C^{(-\ell+j)}_{k-j}(0)\\
&=\begin{cases}
0, &\mbox{if}\quad k-j \text{ is odd},\\
\D\frac{(-1)^j 2^{-j} (-k)_j(-2\ell+k)_j}{j! (-\ell+1/2)_j}\frac{(k-j)!}{(-2\ell+2j)_{k-j}} \frac{ (-1)^{(k-j)/2} (-\ell+j)_{(k-j)/2}}{((k-j)/2)!}, &\mbox{if}\quad k-j\text{ is even}.
\end{cases}\\
&=\begin{cases}
0, &\mbox{if}\quad k-j \text{ is odd},\\
\D\frac{k!(2\ell-k-j+1)_j(\ell-(k+j)/2+1)_{(k-j)/2}}{2^j j!((k-j)/2)!(\ell-j+1/2)_j}, &\mbox{if}\quad k-j\text{ is even}.
\end{cases}
\end{align*}
The third equality comes from the definition of Gegenbauer polynomials in terms of the hypergeometric function (see \cite[Formula (1.8.15)]{KoekS}), while the fourth equality comes from the value of the Gegenbauer polynomials at zero, see \cite[Table 18.6.1]{DLMF}. Observe that all coefficients $\alpha_{k,j}$ are nonnegative. Therefore we have

%and Jacobi polynomials by quadratic transformations (see last formulas in page 39 of \cite{KoekS}) and $P_n^{(\alpha,\beta)}(-1)=(-1)^n(\beta+1)_n/n!$, where $P_n^{(\alpha,\beta)}(x)$ is the regular Jacobi polynomial. 
\begin{align*}
S(y)_{2k,2k}&=\sum_{j=0}^k\alpha_{2k,2j}(1-2y)^{2j},\quad k=0,1,\ldots,\ell,\\
S(y)_{2k+1,2k+1}&=(1-2y)\sum_{j=0}^{k}\alpha_{2k+1,2j+1}(1-2y)^{2j},\quad k=0,1,\ldots,\ell-1,
\end{align*}
from which the Lemma easily follows.
\end{proof}

\begin{prop}\label{Prop21}
Let $\Xi=S^{-1}\Omega^{(\nu)}S$, where $\Omega^{(\nu)}$ is given by \eqref{eqOmg}. Then we have
\begin{equation}\label{OpXi}
\Xi=y(1-y)\partial^2_y + A^{(\nu)}(y) \partial_y + Q^{(\nu)}(y),\quad \partial_y=\frac{d}{dy},
\end{equation}
where
\begin{align*}
A^{(\nu)}(y) &= 2y(1-y)S(y)^{-1}S'(y)+a^{(\nu)}(y), \\ 
Q^{(\nu)}(y) &= y(1-y)S(y)^{-1}S''(y) + a^{(\nu)}(y) S(y)^{-1}S'(y) + S(y)^{-1}F^{(\nu)}(y)S(y).
\end{align*}
Morover, the sum of the rows of $Q^{(\nu)}(y)-(\Lambda^{(\nu)}_0)_{\ell,\ell}$ and the off-diagonal terms of $Q^{(\nu)}$ are nonnegative for all $y\in[0,1]$.
\end{prop}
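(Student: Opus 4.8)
The plan is to establish the formula \eqref{OpXi} first and then treat the two positivity claims separately. For \eqref{OpXi} I would simply apply the Leibniz rule: writing $\Xi f=S^{-1}\Omega^{(\nu)}(Sf)$ for a vector-valued $f$, expand $\partial_y(Sf)=S'f+Sf'$ and $\partial_y^2(Sf)=S''f+2S'f'+Sf''$, substitute into $\Omega^{(\nu)}(Sf)=y(1-y)\partial_y^2(Sf)+a^{(\nu)}(y)\,\partial_y(Sf)+F^{(\nu)}(y)Sf$, multiply on the left by $S^{-1}$, and collect the coefficients of $f''$, $f'$ and $f$. Since $S$ is diagonal it commutes with the scalar functions $y(1-y)$ and $a^{(\nu)}(y)$, so the coefficients come out to be exactly $y(1-y)$, $A^{(\nu)}(y)$ and $Q^{(\nu)}(y)$. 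This step should be completely routine.

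For the statement about row sums, the point I would exploit is that $S$ is designed so that its ``vector of ones'' is an eigenfunction of $\Omega^{(\nu)}$. From the relation $\Psi^{(\nu)}_n(y)=[\Psi_0^\ast(y)P^{(\nu)}_n(y)]^\ast$ with $P^{(\nu)}_0=I$ we get $\Psi^{(\nu)}_0=\Psi_0$, so \eqref{eqOmg} at $n=0$ gives $\Omega^{(\nu)}\Psi_0=\Psi_0\Lambda^{(\nu)}_0$; by \eqref{eigvalue} the matrix $\Lambda^{(\nu)}_0=J\breve J+(\nu-1)(2\ell+\nu+1)I$ is diagonal, so reading off the $\ell$-th column yields $\Omega^{(\nu)}v=(\Lambda^{(\nu)}_0)_{\ell,\ell}\,v$, where $v(y)$ denotes the $\ell$-th column of $\Psi_0(y)$, i.e.\ the vector of diagonal entries of $S(y)$. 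Letting $\mathbf 1$ be the all-ones column vector, so that $S\mathbf 1=v$, $S'\mathbf 1=v'$, $S''\mathbf 1=v''$, and using that $S,S',S''$ are diagonal, I would compute
\[
Q^{(\nu)}\mathbf 1=S^{-1}\!\left(y(1-y)v''+a^{(\nu)}v'+F^{(\nu)}v\right)=S^{-1}\Omega^{(\nu)}v=(\Lambda^{(\nu)}_0)_{\ell,\ell}\,S^{-1}v=(\Lambda^{(\nu)}_0)_{\ell,\ell}\,\mathbf 1 ,
\]
so the rows of $Q^{(\nu)}(y)-(\Lambda^{(\nu)}_0)_{\ell,\ell}I$ in fact sum to zero, and in particular are nonnegative.

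For the off-diagonal entries I would first observe that $S^{-1}S'$ and $S^{-1}S''$ are diagonal, so the off-diagonal part of $Q^{(\nu)}$ coincides with that of $S^{-1}F^{(\nu)}S$; explicitly $(Q^{(\nu)})_{i,j}=\bigl(S(y)_{j,j}/S(y)_{i,i}\bigr)F^{(\nu)}_{i,j}$ for $i\neq j$. Next I would compute from \eqref{eq:Fnu_SU2} together with the explicit $J,\breve J,A$ of \eqref{JJ}--\eqref{AA} that $F^{(\nu)}$ is tridiagonal with
\[
F^{(\nu)}_{i,i+1}=\frac{1-2y}{4y(1-y)}(2\ell-i)(i+\nu),\qquad F^{(\nu)}_{i+1,i}=\frac{1-2y}{4y(1-y)}(i+1)(2\ell-i+\nu-1),
\]
so that (the numerical factors being positive for $\nu>0$) each off-diagonal entry of $F^{(\nu)}(y)$ has the sign of $1-2y$ on $(0,1)$. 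Lemma \ref{lem:SS} says exactly that the ratios $S(y)_{i+1,i+1}/S(y)_{i,i}$, and hence also $S(y)_{i,i}/S(y)_{i+1,i+1}$, have the sign of $1-2y$; therefore each of $(Q^{(\nu)})_{i,i+1}$ and $(Q^{(\nu)})_{i+1,i}$ is a product of two quantities of the same sign, hence $\geq0$, on $(0,1)\setminus\{1/2\}$.

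The main obstacle, and the only place requiring genuine care, will be the behaviour at $y=\tfrac12$ --- where $S$ degenerates because its odd diagonal entries vanish --- and at the endpoints $y=0,1$, where $F^{(\nu)}$ has singularities. Here I would argue directly from the sign-definite expansions of the $S(y)_{k,k}$ in powers of $(1-2y)$ produced in the proof of Lemma \ref{lem:SS} (noting in particular the factor $(1-2y)$ carried by the odd-indexed entries, which cancels the corresponding factor in $F^{(\nu)}$) to show that the entries of $Q^{(\nu)}$ have well-defined one-sided limits with the asserted sign. Everything else reduces to the Leibniz rule, Lemma \ref{lem:SS}, and the eigenfunction property of $\Psi_0$.
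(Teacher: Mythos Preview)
Your proposal is correct and follows essentially the same approach as the paper: the formula \eqref{OpXi} by Leibniz, the row-sum identity by reading off the $\ell$-th column of $\Omega^{(\nu)}\Psi_0=\Psi_0\Lambda_0^{(\nu)}$, and the off-diagonal nonnegativity via the explicit tridiagonal entries of $F^{(\nu)}$ combined with Lemma~\ref{lem:SS}. Your treatment is slightly more thorough in that you write out both off-diagonal entries and flag the degenerate points $y=0,\tfrac12,1$, whereas the paper computes only the $(k,k+1)$ entry and declares the other case analogous; but the underlying argument is the same.
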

\begin{proof}
It follows from \eqref{eqOmg} that the spherical functions $\Psi^{(\nu)}_n$ are solutions of the differential equation
$$
y(1-y) \, \left[\Psi^{(\nu)}_n(y)\right]'' + a^{(\nu)}(y) \, \left[\Psi^{(\nu)}_n(y)\right]' + F^{(\nu)}(y)\, \Psi^{(\nu)}_n(y) = \Psi^{(\nu)}_n(y) \, \Lambda_n^{(\nu)},
$$
where $F^{(\nu)}(y)$ is defined by \eqref{eq:Fnu_SU2} and $\Lambda_n^{(\nu)}$ by \eqref{eigvalue}. A straightforward computation shows that the function $\chi_n=S^{-1}\Psi^{(\nu)}_n$ satisfies the following differential equation:
\begin{multline*}
y(1-y)\chi_n''(y) + (2y(1-y)S(y)^{-1}S'(y)+a^{(\nu)}(y))\chi_n'(y)  \\
+ [y(1-y)S(y)^{-1}S''(y) + a^{(\nu)}(y) S(y)^{-1}S'(y) + S(y)^{-1}F^{(\nu)}(y)S(y)]\chi_n(y)
= \chi_n(y) \Lambda_n^{(\nu)}.
\end{multline*}
This proves the first statement of the proposition. Observe that the fact that the sum of the rows of $Q^{(\nu)}(y)-(\Lambda_0^{(\nu)})_{\ell,\ell}$ is zero, is equivalent to 
$$
[y(1-y)S(y)^{-1}S''(y) + a^{(\nu)}(y) S(y)^{-1}S'(y) + S(y)^{-1}F^{(\nu)}(y)S(y)-(\Lambda_0^{(\nu)})_{\ell,\ell}]\bm e_{2\ell+1}=0,
$$
where $\bm e_{2\ell+1}^*=(1,1,\ldots,1)\in\mathbb{C}^{2\ell+1}$, which is, in turn, equivalent to
$$
[y(1-y)S''(y) + a^{(\nu)}(y) S'(y) + F^{(\nu)}(y)S(y)-(\Lambda_0^{(\nu)})_{\ell,\ell}]\bm e_{2\ell+1}=0.
$$
If we denote by  $(\Psi_0)_\ell$ the $\ell$-th column of $\Psi_0$, it follows from \eqref{SS} that
$$
y(1-y)(\Psi_0)''_\ell(y)  + a^{(\nu)}(y) (\Psi_0)'_{\ell}(y)  + F^{(\nu)}(y)(\Psi_0)_{\ell}(y)  = (\Lambda_0^{(\nu)})_{\ell,\ell},
$$
which is the $\ell$-th column of \eqref{eqOmg}. 

Finally, the off-diagonal terms of $Q^{(\nu)}$ come from the term $S(y)^{-1}F^{(\nu)}(y)S(y)$. More precisely we have
$$(S(y)^{-1}F^{(\nu)}(y)S(y))_{k,k+1}=\frac{i(2\ell+\nu-k)(1-2y)}{4y(1-y)} \, \frac{S(y)_{k+1,k+1}}{S(y)_{k,k}},$$
which is nonnegative for all $y\in[0,1]$ by Lemma \ref{lem:SS}. The proof for the $(k,k-1)$-th entry is analogous. This completes the proof of the proposition.
\end{proof}

\begin{rmk}
There are two properties of the matrix-valued function $Q^{(\nu)}$ which are essential in the forthcoming sections: first, the sum of the rows is equal  to zero and second, the off-diagonal terms are nonnegative for $y\in[0,1]$. 

It follows from the proof of Proposition \ref{Prop21} that, for the sum of the rows of $Q^{(\nu)}$ to be zero, the diagonal matrix $S$ can be replaced by any column of the function $\Psi^{(\nu)}_n(y)$, viewed as a diagonal matrix. Our specific choice of $S$ is due to the fact that it has a simple expression that allows us to verify the second property of $Q^{(\nu)}$.

The proof of the first property follows from a  general argument that can be extended in a straightforward way to any of the families of matrix-valued spherical functions associated to compact Gelfand Pairs studied in \cite{HvP, vPR1}. The main challenge in finding probabilistic interpretations for the new families is to find a suitable diagonal matrix $S$ so that the second property holds.
\end{rmk}

\subsection{Block reducibility of the weight matrix}
The commutant algebra of the weight $W^{(\nu)}(y)$, denoted by $Z^{(\nu)}=\{ T\in M_{2\ell+1}(\C)\mid  [T,W^{(\nu)}(y)]=0 \, \forall y\in[0,1]\},$ was computed in \cite[Proposition 2.6]{KdlRR} where it was shown that it is generated by the matrix $H$, where $H\in M_{2\ell+1}(\C)$ is the self-adjoint involution defined by $H\colon e_j \mapsto e_{2\ell-j}$. Therefore there is an orthogonal decomposition with respect to the $\pm 1$-eigenspaces of $H$. More precisely, let $Y$ defined by
\begin{equation}
\label{YY}
\begin{split}
Y&=\frac{1}{\sqrt{2}}\begin{pmatrix} I_{\ell+\frac12} & H_{\ell+\frac12} \\
-H_{\ell+\frac12} & I_{\ell+\frac12} \end{pmatrix},\text{ if }\ell
=\frac{2n+1}{2},\quad n\in \mathbb{N},\\
Y&=\frac{1}{\sqrt{2}}\begin{pmatrix} I_{\ell}& 0 & H_{\ell} \\ 0 & \sqrt{2} & 0 \\
-H_{\ell}& 0 & I_{\ell} \end{pmatrix},\text{ if }\ell\in \mathbb{N}.
\end{split}
\end{equation}
Then
$$
\widetilde W(y)=YW^{(\nu)}(y)Y^*=\left(
\begin{array}{c|c}
W_1(y) & 0 \\
\hline
0& W_2(y) 
\end{array}
\right),
$$
where $W_1(y)$ is a $(\ell+1)\times(\ell+1)$ weight matrix and $W_2(y)$ is a $\ell\times\ell$ weight matrix. Observe that by  \cite[Example 4.2]{KRo} no further non-orthogonal decomposition is possible. We will use this block matrix decomposition in the next section to analyze two independent processes generated by the weight matrices $W_1(y)$ and $W_2(y)$. As we will see the probabilistic interpretation of these examples will not change under this transformation.

\section{The $\ell=1$ case}\label{SEC3}

For the $\ell=1$ case, the matrix $Y$ in \eqref{YY} is given by
\begin{equation}\label{YY3}
Y=\frac{1}{\sqrt{2}}\begin{pmatrix} 1&0&1 \\ 0&\sqrt{2}&0\\-1 &0&1 \end{pmatrix}.
\end{equation}
Therefore
$$
\widetilde{W}(y)=YW^{(\nu)}(y)Y^*=\left(
\begin{array}{c|c}
W_1(y) & \begin{array}{c} 0\\0\end{array} \\
\hline
\begin{array}{cc} 0&0\end{array} & w_2(y) 
\end{array}
\right),\quad y\in[0,1],
$$
where $W_1(y)$ is a $2\times2$ weight matrix and $w_2(y)$ is a positive scalar weight. This matrix $Y$ is unique up to linear polynomial combinations of $Y$. For this case, in order to study conveniently the stochastic processes behind, it will be appropriate to take different matrix transformations.

\subsection{Two birth-and-death models}

We take in this case the transformation matrix $T$ given by
$$
T=I_3+Y^2=\begin{pmatrix} 1&0&1 \\ 0&2&0\\-1 &0&1 \end{pmatrix},
$$
where $Y$ is given by \eqref{YY3}. Consider the monic matrix-valued orthogonal polynomials $P_n^{(\nu)}(y)$ corresponding to the weight matrix $W^{(\nu)}(y)$ defined in \eqref{WWnu}. With this transformation we have
$$
\widetilde{W}(y)=TW^{(\nu)}(y)T^*=\left(
\begin{array}{c|c}
W_1(y) & \begin{array}{c} 0\\0\end{array} \\
\hline
\begin{array}{cc} 0&0\end{array} & w_2(y) 
\end{array}
\right),\quad y\in[0,1],
$$
and 
$$
\widehat{P}_n(y)=TP_n^{(\nu)}(y)T^{-1}=\left(
\begin{array}{c|c}
P_{n,1}(y) & \begin{array}{c} 0\\0\end{array} \\
\hline
\begin{array}{cc} 0&0\end{array} & p_{n,2}(y) 
\end{array}
\right),
$$
where $\widehat{P}_n(y)$ is again a monic family. We normalize this family conveniently choosing a sequence of diagonal matrices $L_n$ such that $Q_n(y)=L_n\widehat{P}_n(y)$ satisfies
\begin{equation}\label{Qn0}
Q_n(0)\bm{e}_3=\bm{e}_3,
\end{equation}
where $\bm{e}_N$ denotes the column vector of dimension $N$ of all components equal to 1, i.e. $\bm{e}_N=(1,1,\ldots,1)^*$. This sequence of diagonal matrices is given by
$$
L_{2n}=4^{n}\begin{pmatrix} \frac{(\nu+n+1)_{n+1}}{(1+\nu)(\nu+3/2)_n}&0&0 \\ 0&\frac{(\nu+2n)(\nu+n+1)_{n}}{\nu(\nu+3/2)_n}&0\\0 &0&\frac{(\nu+n+1)_{n}}{(\nu+3/2)_n} \end{pmatrix},\quad n\geq0,
$$
and
$$
L_{2n+1}=-2\cdot4^{n}\begin{pmatrix} \frac{(\nu+n+2)_{n+1}}{(1+\nu)(\nu+3/2)_n}&0&0 \\ 0&\frac{(\nu+2n+1)(\nu+n+2)_{n}}{\nu(\nu+3/2)_n}&0\\0 &0&\frac{(\nu+n+2)_{n}}{(\nu+3/2)_n} \end{pmatrix},\quad n\geq0.
$$
$Q_n$ can also be divided by blocks
\begin{equation}\label{QQ2}
Q_n(y)=\left(
\begin{array}{c|c}
Q_{n,1}(y) & \begin{array}{c} 0\\0\end{array} \\
\hline
\begin{array}{cc} 0&0\end{array} & q_{n,2}(y) 
\end{array}
\right).
\end{equation}
Observe also that the norms of $Q_n$ with respect to $\widetilde W$ are related with the norms of $P_n^{(\nu)}$ with respect to $W^{(\nu)}$ as follows
\begin{equation}\label{RelNor}
\|Q_n\|^2_{\widetilde W}=L_nT\|P_n^{(\nu)}\|^2_{W^{(\nu)}}(L_nT)^*,\quad n\geq0,
\end{equation}
where $\|P_n^{(\nu)}\|^2_{W^{(\nu)}}$ are given by \eqref{NormsG}.

From \eqref{TTRRPn} we see that the sequence of matrix-valued orthogonal polynomials $Q_n(y)$ satisfies a three-term recurrence relation of the form
\begin{equation}\label{TTRRQ}
-yQ_n(y)=A_nQ_{n+1}(y)+B_nQ_n(y)+C_nQ_{n-1}(y),
\end{equation}
where the coefficients are given by
\begin{align*}
A_n&=-L_nL_{n+1}^{-1}=\left(
\begin{array}{cc|c}
\frac{2\nu+n+2}{4(\nu+n+2)}&0&0\\
0&\frac{(n+\nu)(2\nu+n+2)}{4(\nu+n+1)^2} &0\\
\hline
0&0& \frac{2\nu+n+2}{4(\nu+n+1)}
\end{array}
\right),\quad n\geq0,\\
B_n&=-L_nTB_n^{(\nu)}(L_nT)^{-1}=\left(
\begin{array}{cc|c}
-\frac{1}{2}&\frac{\nu}{2(\nu+n)(\nu+n+2)}&0\\
\frac{1+\nu}{2(\nu+n+1)^2} &-\frac{1}{2} &0\\
\hline
0&0& -\frac{1}{2}
\end{array}
\right),\quad n\geq0,\\
C_n&=-L_nTC_n^{(\nu)}(L_{n-1}T)^{-1}=\left(
\begin{array}{cc|c}
\frac{n}{4(\nu+n)}&0&0\\
0&\frac{n(\nu+n+2)}{4(\nu+n+1)^2} &0\\
\hline
0&0& \frac{n}{4(\nu+n+1)}
\end{array}
\right),\quad n\geq1.
\end{align*}
The corresponding Jacobi matrix is a block tridiagonal matrix with the property that the diagonal entries are negative, the off-diagonal entries are nonnegative and the sum of each row equals 0 (as a consequence of \eqref{Qn0} and \eqref{TTRRQ}). Therefore the Jacobi matrix is the matrix of an infinitesimal operator associated with a continuous-time quasi-birth-and-death process with two-dimensional state space $\N\times\{1,2,3\}$. As we can see from the division by blocks of the coefficients $A_n,B_n,C_n$, this process splits into two independent processes. The first one is a continuous-time quasi-birth-and-death process with two-dimensional state space $\N\times\{1,2\}$ with coefficients
\begin{align}
\nonumber A_{n,1}&=\left(
\begin{array}{cc}
\frac{2\nu+n+2}{4(\nu+n+2)}&0\\
0&\frac{(n+\nu)(2\nu+n+2)}{4(\nu+n+1)^2}
\end{array}
\right),\quad n\geq0,\\
\label{coeff2}B_{n,1}&=\left(
\begin{array}{cc}
-\frac{1}{2}&\frac{\nu}{2(\nu+n)(\nu+n+2)}\\
\frac{1+\nu}{2(\nu+n+1)^2} &-\frac{1}{2}
\end{array}
\right),\quad n\geq0,\\
\nonumber C_{n,1}&=\left(
\begin{array}{cc}
\frac{n}{4(\nu+n)}&0\\
0&\frac{n(\nu+n+2)}{4(\nu+n+1)^2} 
\end{array}
\right),\quad n\geq1.
\end{align}
Therefore, the matrix of the infinitesimal operator (conservative) is a pentadiagonal matrix given by
\begin{equation}\label{AA1}
\mathcal{A}_1=\left(
\begin{array}{cc|cc|cc|cc|c}
-\frac{1}{2}&\frac{1}{2(\nu+2)}&\frac{\nu+1}{2(\nu+2)}&0&0&0&0&0&\cdots\\
\frac{1}{2(\nu+1)} &-\frac{1}{2} &0&\frac{\nu}{2(\nu+1)}&0&0&0&0&\cdots\\
\hline
\frac{1}{4(\nu+1)}&0&-\frac{1}{2}&\frac{\nu}{2(\nu+1)(\nu+3)}&\frac{2\nu+3}{4(\nu+3)}&0&0&0&\cdots\\
0&\frac{\nu+3}{4(\nu+2)^2}&\frac{1+\nu}{2(\nu+2)^2} &-\frac{1}{2} &0&\frac{(1+\nu)(2\nu+3)}{4(\nu+2)^2}&0&0&\cdots\\
\hline
0&0&\frac{1}{2(\nu+2)}&0&-\frac{1}{2}&\frac{\nu}{2(\nu+2)(\nu+4)}&\frac{\nu+2}{2(\nu+4)}&0&\cdots\\
0&0&0&\frac{\nu+4}{2(\nu+3)^2}&\frac{1+\nu}{2(\nu+3)^2} &-\frac{1}{2} &0&\frac{(2+\nu)^2}{2(\nu+3)^2}&\cdots\\
\hline
\vdots&\vdots&\vdots&\vdots&\ddots&\ddots&\ddots&\ddots&\ddots
\end{array}
\right)
\end{equation}
The second process is a regular birth-and-death process with rational birth and death parameters given by
\begin{equation}\label{bdp}
\lambda_n=\frac{2\nu+n+2}{4(\nu+n+1)},\quad \mu_n=\frac{n}{4(\nu+n+1)},\quad n\geq0.
\end{equation}
Therefore, the matrix of the infinitesimal operator (again conservative) is a tridiagonal matrix given by
\begin{equation}\label{AA2}
\mathcal{A}_2=\begin{pmatrix}
-\frac{1}{2}&\frac{1}{2}&0&&\\
\frac{1}{4(\nu+2)}&-\frac{1}{2}&\frac{2\nu+3}{4(\nu+2)}&0&\\
0&\frac{1}{2(\nu+3)}&-\frac{1}{2}&\frac{\nu+2}{2(\nu+3)}&0\\
&&\ddots&\ddots&\ddots
\end{pmatrix}.
\end{equation}

The good thing about these two processes is that we have explicitly all the elements to perform the spectral analysis (the weights, orthogonal polynomials and norms), so we can have a Karlin-McGregor formula for the transition probabilities of both processes, which is unique since all coefficients are bounded (see \cite[Section 4.3]{BW}). Let us study the probabilistic properties of each one of these processes.

%\begin{enumerate}
%
%
%\item 
\smallskip

\noindent {\bf (1)} Let $\{X_t: t\geq0\}$ be the birth-and-death process associated with the infinitesimal operator \eqref{AA2}. The transition probabilities are given by
$$
P_{ij}^{(2)}(t)=\mathbb{P}(X_t=j | X_0=i).
$$
The potential coefficients can be explicitly calculated from the definition of $\lambda_n$ and $\mu_n$ in \eqref{bdp}. Indeed,
$$
\pi_0=1,\quad \pi_n=\frac{2(\nu+n+1)(2\nu+3)_{n-1}}{n!},\quad n\geq1.
$$
The scalar weight is given by
\begin{equation}\label{ww1}
w_2(y)=\frac{4^{\nu+1}(\nu+1)_2}{\nu+1/2}\left[y(1-y)\right]^{\nu+1/2},\quad y\in[0,1],\quad \nu>-3/2.
\end{equation}
The polynomials $q_{n,2}(y)$ in \eqref{QQ2} are a special instance of the Gegenbauer polynomials on $[0,1]$ with the property that $q_{n,2}(0)=1$. In particular, we have that 
$$
\pi_n=\frac{\|q_{0,2}\|^2_{w_2}}{\|q_{n,2}\|^{2}_{w_2}},\quad \|q_{0,2}\|^2_{w_2}=\frac{\sqrt{\pi}(\nu+2)\Gamma(\nu+1/2)}{\Gamma(\nu+1)}.
$$
We can therefore perform the spectral analysis of the process and have the Karlin-McGregor representation
\begin{align*}
    P_{ij}^{(2)}(t)&=\frac{1}{\|q_{j,2}\|^2_{w_2}}\int_0^1 e^{-yt}q_{i,2}(y)q_{j,2}(y)w_2(y)dy\\
    &=\frac{2(\nu+j+1)(2\nu+3)_{j-1}4^{\nu+1}\Gamma(\nu+2)}{j!\sqrt{\pi}\Gamma(\nu+3/2)}\int_0^1 e^{-yt}q_{i,2}(y)q_{j,2}(y)\left[y(1-y)\right]^{\nu+1/2}dy.
\end{align*}
We can also analyze the recurrence of the process in terms of the weight $w_2(y)$. Indeed, a necessary and sufficient condition in order for the process to be recurrent is that 
$$
\int_0^1\frac{w_2(y)}{y}dy=\infty.
$$
From the definition \eqref{ww1} we see that this is possible only when $-3/2<\nu\leq-1/2$. Otherwise (if $\nu>-1/2$) the process will be transient. For the values where the process is recurrent it is possible to see that $\sum \pi_n=\infty$, so the process will be \emph{null recurrent} and it can never be positive recurrent or ergodic. This behavior can be seen in Figure \ref{ctmc1}. In the first plot, we fix $\nu=-5/4$ (recurrent), so the trajectories can reach the boundary state 0 recurrently. In the second plot $\nu=0$ (transient) so the length of the queue tends to go to infinity and never comes back.

\begin{figure}[h]
\begin{center}
\vspace{-0.4cm}
\includegraphics[height=8.4cm]{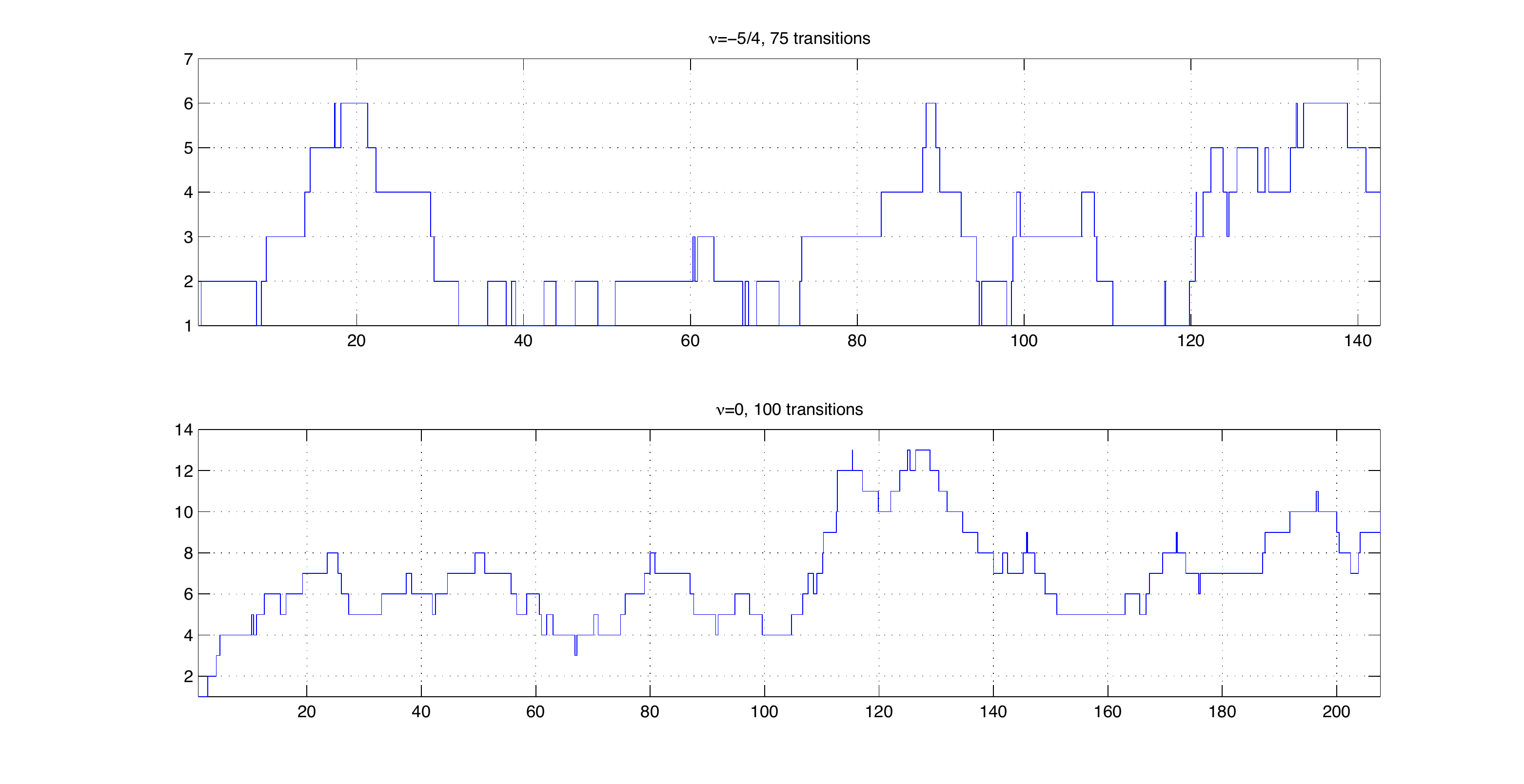}
\vspace{-1.2cm}
\end{center}
\caption{Trajectories of the queue starting at $X_0=1$ (the state space is $\{1,2,3,\ldots\}$) given by coefficients \eqref{bdp} for the value of the parameters $\nu=-5/4$ (null recurrent) and $\nu=0$ (transient).}
\label{ctmc1}
\end{figure}

This birth-and-death process can be seen as a rational variant of the one-server queue as the length of the queue increases. As $n\to\infty$ we see that both birth and death coefficients in \eqref{bdp} converges to $1/4$. These coefficients make a difference when the length of the queue is short depending on the parameter $\nu$ (except when $\nu=-1$ where both coefficients are constant). But when it is growing the queue behaves like the one-server queue.

\smallskip

\noindent {\bf (2)} Let $\{Z_t=(X_t,Y_t): t\geq0\}$ be the two-dimensional quasi-birth-and-death process associated with the infinitesimal operator \eqref{AA1}. The transition probabilities are given by
$$
\left(P_{ij}^{(1)}(t)\right)_{i'j'}=\mathbb{P}(X_t=j, Y_t=j' | X_0=i, Y_0=i'),\quad i,j\in\N,\quad i',j'\in\{1,2\}.
$$
Observe that $P^{(1)}(t)$ is a block matrix. The probability of going from state $(i,i')$ to state $(j,j')$ in time $t$ is given by the element in the position $(i',j')$ of the matrix $P_{ij}^{(1)}(t)$. The weight matrix is supported on $[0,1]$ and is given by
\begin{equation}\label{WW1}
W_1(y)=4^{\nu+1/2}(\nu+2)\left[y(1-y)\right]^{\nu-1/2} \begin{pmatrix}1-\frac{2(1+\nu)}{\nu+1/2}y(1-y) &1-2y\\
1-2y&1-\frac{2\nu}{\nu+1/2}y(1-y)
\end{pmatrix},
\end{equation}
where now, in order that the infinitesimal matrix \eqref{AA1} has a probabilistic interpretation, we need to impose $\nu\geq0$ (although the weight matrix is well defined for $\nu>-1/2$). Each block entry $(i,j)$ of $P^{(1)}(t)$ admits a Karlin-McGregor integral representation of the form (see \cite{DR})
\begin{equation*}\label{KMcG1}
    P_{ij}^{(1)}(t)=\bigg(\int_0^1 e^{-yt}Q_{i,1}(y)W_1(y)Q_{j,1}^*(y)dx\bigg)\bigg(\int_0^1
    Q_{j,1}(y)W_1(y)Q_{j,1}^*(y)dy\bigg)^{-1}.
\end{equation*}
As it was shown in \cite{dI1} the inverse matrix of the norms of the polynomials $Q_{n,1}$ in \eqref{QQ2} are exactly the \emph{matrix-valued potential coefficients}, defined by
$$
\Pi_0=\|Q_{0,1}\|_{W_1}^{-2},\quad \Pi_n=\left(\|Q_{n,1}\|_{W_1}^2\right)^{-1}=(C_{1,1}^*C_{2,1}^*\cdots C_{n,1}^*)^{-1}\|Q_{0,1}\|_{W_1}^{-2}A_{0,1}A_{1,1}\cdots A_{n-1,1},
$$
where $A_{n,1}$ and $C_{n,1}$ are defined in \eqref{coeff2}. Since $A_{n,1}$ and $C_{n,1}$ are diagonal matrices and the norm of $Q_{0,1}=I_2$ is given by
$$
\|Q_{0,1}\|_{W_1}^{-2}=\frac{\Gamma(\nu+1)}{\sqrt{\pi}(\nu+2)\Gamma(\nu+1/2)}\begin{pmatrix}1&0\\0&\D\frac{\nu+1}{\nu+2}\end{pmatrix},
$$
we can calculate an explicit expression of the matrix-valued potential coefficients with the help of \eqref{NormsG} and \eqref{RelNor}, given by
$$
\Pi_0=\|Q_{0,1}\|_{W_1}^{-2},\quad \Pi_n=\frac{2\Gamma(\nu+2)(2\nu+3)_{n-1}}{\sqrt{\pi}n!(\nu+2)\Gamma(\nu+1/2)}\begin{pmatrix} \frac{\nu+1}{\nu+n+1}&0\\0&\frac{\nu(\nu+n+1)}{(\nu+n)(\nu+n+2)}\end{pmatrix},\quad n\geq1.
$$
Not only that, but according to Theorem 3.1 of \cite{dI1} we can compute \emph{explicitly} the invariant measure of the process, given by
\begin{align*}
\nonumber\bm\pi&=\left((\Pi_0\bm e_2)^*;(\Pi_1\bm e_2)^*(\Pi_2\bm e_2)^*;\cdots\right),\quad\bm e_2^*=(1,1),\\
&=\frac{\Gamma(\nu+1)}{\sqrt{\pi}\Gamma(\nu+1/2)(\nu+2)}\left(1,\frac{\nu+1}{\nu+2};\frac{2(\nu+1)^2}{\nu+2},\frac{2\nu(\nu+2)}{\nu+3};\cdots\right).
\end{align*}
We observe that for all values of $\nu$
\begin{align*}
\sum_{n=0}^{\infty}\bm\pi_n&=\frac{\Gamma(\nu+1)}{\sqrt{\pi}\Gamma(\nu+1/2)(\nu+2)}
\left[1+2(\nu+1)^2\sum_{n=1}^{\infty}\frac{(2\nu+3)_{n-1}}{n!(\nu+n+1)}\right.\\
&\left.\frac{\nu+1}{\nu+2}\left(1+2\nu(\nu+2)\sum_{n=1}^{\infty}\frac{(2\nu+3)_{n-1}(\nu+n+1)}{n!(\nu+n)(\nu+n+2)}\right)\right]=\infty.
\end{align*}

We can analyze the recurrence of the process in terms of the weight $W_1(y)$. According to Theorem 4.1 in \cite{DR} the process is $\alpha$-recurrent if and only if for some $l=1,2,$ we have that
$$
e_l^*\left(\int\frac{W_1(y)}{x-\alpha}dy\right)e_l=\infty,
$$
where $e_1^*=(1,0)$ and $e_2^*=(0,1)$. Since in our case the process is irreducible and the weight matrix is supported in the interval $[0,1]$, then $\alpha=0$, in which case $\alpha$-recurrence is equivalent to regular recurrence. From the definition \eqref{WW1} we see that the process is recurrent only when $0\leq\nu\leq 1/2$. Otherwise (if $\nu>1/2$) the process will be transient. For the values where the process is recurrent we have that $\sum\bm\pi_n=\infty$, so the process will always be \emph{null recurrent}. This behavior can be seen in Figure \ref{ctmc2} and it is similar to the previous example.

\begin{figure}[h]
\begin{center}
\vspace{-0.5cm}
\includegraphics[height=8.6cm]{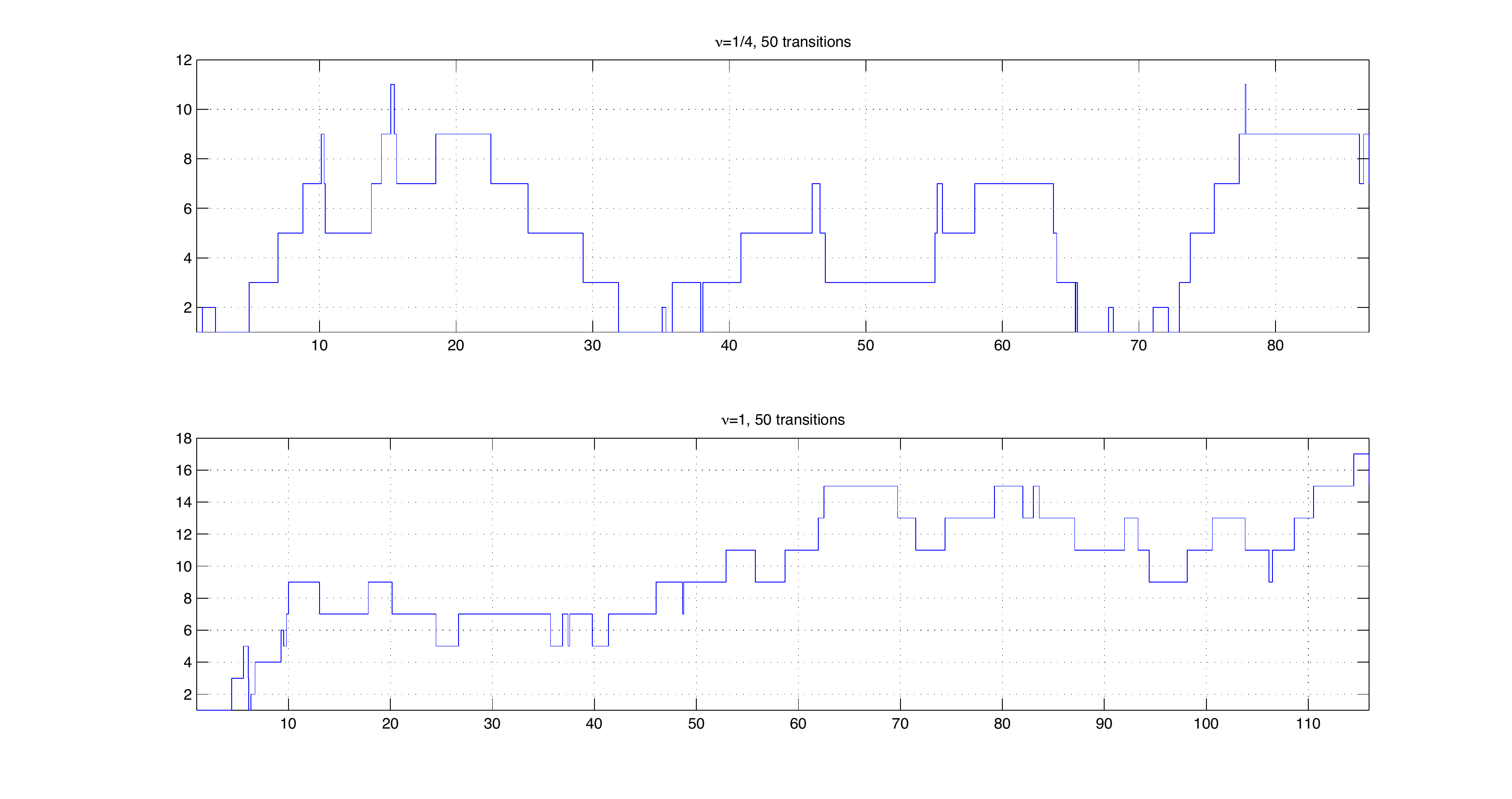}
\vspace{-1.0cm}
\end{center}
\caption{Trajectories of the queue starting at $X_0=1$ and $Y_0=1$ (the state space is $\{1,2,3,\ldots\}\times\{1,2\}$) with infinitesimal operator \eqref{AA1} for the value of the parameters $\nu=1/4$ (null recurrent) and $\nu=1$ (transient).}
\label{ctmc2}
\vspace{-.2cm}
\end{figure}

This quasi-birth-and-death process (with 2 phases) may be viewed as a queue with state space $\{0,1,\ldots\}$ and the following behavior. There are two ways of increasing or decreasing the length of the queue, either by 1 element or by 2. If the process moves along any of the phases, then the process can add (or remove) 2 elements to the queue. On the contrary, if the process moves from one phase to another, then the process add (or remove) 1 element to the queue. The transitions of phases are ruled by entries $(1,2)$ and $(2,1)$ of $B_{n,1}$ in \eqref{coeff2}. As $n\to\infty$ these coefficients tend to 0, meaning that as the length of the queue increases, it is very unlikely that a transition between phases occurs. This behavior can be seen more closely in Figure \ref{ctmc22}. As $n\to\infty$ the birth and death rates for each phase tend to $1/4$, so it behaves like the one-server queue \emph{but} adding or removing 2 elements to the queue. Therefore this quasi-birth-and-death process may be viewed as a rational variation of a couple of  one-server queues where the interaction between them is remarkable in the first states of the queue.

\begin{figure}[h]
\begin{center}
\vspace{-0.3cm}
\includegraphics[height=6.4cm]{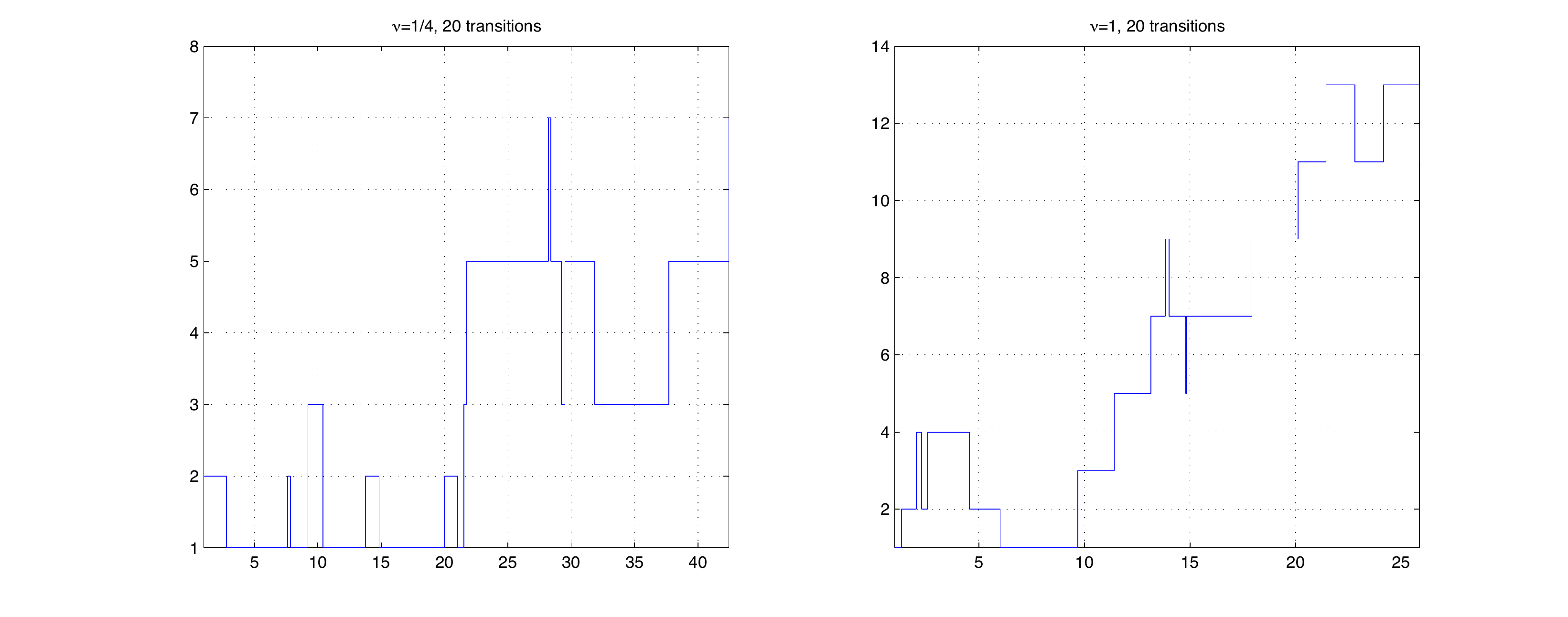}
\vspace{-1cm}
\end{center}
\caption{Trajectories of the queue starting at $X_0=1$ and $Y_0=1$ (the state space is $\{1,2,3,\ldots\}\times\{1,2\}$) with infinitesimal operator \eqref{AA1} for the value of the parameters $\nu=1/4$ and $\nu=1$. The possibilities of increasing or decreasing by 1 the queue are higher when the length of the queue is shorter.}
\label{ctmc22}
\end{figure}

The importance about this example, as far as the authors know, is that it is the first nontrivial \emph{continuous-time} level-dependent quasi-birth-and-death process where a complete spectral analysis can be given.

%\end{enumerate}

\subsection{Two diffusion models}\label{S32}
In this case we have to follow the conjugation given by the matrix $S(y)$ in \eqref{SS} (see also \eqref{eq:Psi0_k_y} and \eqref{eq:coef_ch}), which it is given by
$$
S(y)=\begin{pmatrix} 1 & 0& 0\\0&1-2y&0\\0&0&1\end{pmatrix}.
$$
Additionally, we consider the transformation matrix $T$ given by
\begin{equation}\label{TTT}
T=-\frac{\sqrt{2}}{2}I_3+(1+\sqrt{2})Y-\frac{\sqrt{2}}{2}Y^2=\begin{pmatrix} 1&0&1 \\ 0&1&0\\-1 &0&1 \end{pmatrix},
\end{equation}
where $Y$ is given by \eqref{YY3}. These two transformations allow us to derive second-order differential operators with stochastic interpretation, according to Proposition \ref{Prop21}, as well as splitting the weights and polynomials into blocks, which will not change the probabilistic interpretation of these operators.

Let $\{W^{(\nu)},D^{(\nu)}\}$ be the pair given by \eqref{WWnu} and \eqref{diffop}, respectively. We consider a transformation of this pair according to the following function
$$
R(y)=\Psi_0^{-1}(y)S(y)T^*,
$$
where $\Psi_0(y)$ is given by \eqref{def:Phi_0}. The new pair is $\{\widetilde{W},\widetilde{D}\}$, where
$$
\widetilde{W}(y)=R^*(y)W^{(\nu)}(y)R(y),\quad \widetilde{D} F(y)=R^{-1}(y)D^{(\nu)}\left[(R(y)F(y)\right].
$$
Observe that $\widetilde{D} $ is the operator $(T^*)^{-1}\Xi T^*$, where $\Xi$ is given in Proposition \ref{Prop21}. Consider now $P_n^{(\nu)}$ the monic family of matrix-valued orthogonal polynomials with respect to $W^{(\nu)}$ given by \eqref{defmonic} such that $D^{(\nu)}(P_n^{(\nu)})^*=(P_n^{(\nu)})^*\Lambda_n^{(\nu)}$, where $\Lambda_n^{(\nu)}$ is given by \eqref{eigvalue}. Define the sequence of matrix-valued functions
\begin{equation}\label{eq:Qn_l=1_diffussion}
Q_n(y)=R^{-1}(y)(P_n^{(\nu)}(y))^*T^*.
\end{equation}
Observe that $Q_n$ are no longer real matrix-valued polynomials since
$$
R^{-1}(y)=\begin{pmatrix}
1-2y&1&1-2y\\
1/(1-2y)&1&1/(1-2y)\\
2i\sqrt{y(1-y)}&0&-2i\sqrt{y(1-y)}
\end{pmatrix}.
$$
Then it is easy to see that $Q_n$ is a family of matrix-valued orthogonal functions with respect to $\widetilde{W}$, which is  given by
\begin{equation*}
\widetilde{W}(y)=\frac{4^{\nu-1}(2+\nu)[y(1-y)]^{\nu-1/2}}{\nu+1/2}\left(
\begin{array}{cc|c}
1+\nu&0&0\\
0&\nu(1-2y)^2&0\\
\hline
0&0&1+\nu
\end{array}
\right).
\end{equation*}
$Q_n$ can also be divided by blocks
\begin{equation}\label{QQ}
Q_n(y)=\left(
\begin{array}{c|c}
Q_{n,1}(y) & \begin{array}{c} 0\\0\end{array} \\
\hline
\begin{array}{cc} 0&0\end{array} & q_{n,2}(y) 
\end{array}
\right),
\end{equation}
and the norms are given by
\begin{equation}\label{RelNor2}
\|Q_n^*\|^2_{\widetilde W}=T\|P_n^{(\nu)}\|^2_{W^{(\nu)}}T^*.
\end{equation}
Additionally, $Q_n$ is eigenfunction of the second-order differential operator
\begin{align*}
\widetilde{D}=y(1-y)\partial_y^2+&\left(
\begin{array}{cc|c}
(\nu+1/2)(1-2y)&0&0\\
0&(\nu+3/2)(1-2y)-\D\frac{1}{1-2y} &0\\
\hline
0&0& (\nu+1/2)(1-2y)
\end{array}
\right)\partial_y\\
\nonumber&+\frac{1}{2y(1-y)}\left(
\begin{array}{cc|c}
-\nu(1-2y)^2&\nu(1-2y)^2&0\\
1+\nu&-(1+\nu) &0\\
\hline
0&0& -\nu(1-2y)^2
\end{array}
\right),
\end{align*}
i.e. $\widetilde{D}Q_n=Q_n\widetilde{\Lambda}_n$, where the eigenvalue is $\widetilde{\Lambda}_n=\Lambda_n^{(\nu)}+\nu^2+2\nu-4$ and in this case it is given by
\begin{equation*}\label{Eig3}
\widetilde{\Lambda}_n=\left(
\begin{array}{cc|c}
-1-n(n+2\nu+2)&0&0\\
0&-n(n+2\nu+2)&0\\
\hline
0&0& -1-n(n+2\nu+2)
\end{array}
\right),\quad n\geq0.
\end{equation*}
This second-order differential operator can be identified with the infinitesimal operator of a two-dimensional diffusion process (also known as switching diffusion processes) with state space $[0,1]\times\{1,2,3\}$. As before, the division by blocks gives two independent processes. The first one is a switching diffusion process with state space $[0,1]\times\{1,2\}$ with infinitesimal operator given by
\begin{align}\label{DifOpB1}
D_1=y(1-y)\partial_y^2+&\left(
\begin{array}{cc}
(\nu+1/2)(1-2y)&0\\
0&(\nu+3/2)(1-2y)-\D\frac{1}{1-2y}
\end{array}
\right)\partial_y\\
\nonumber&+\frac{1}{2y(1-y)}\left(
\begin{array}{cc}
-\nu(1-2y)^2&\nu(1-2y)^2\\
1+\nu&-(1+\nu)
\end{array}
\right),\quad \nu\geq0,
\end{align}
with eigenvalue
\begin{equation}\label{Eig31}
\Lambda_{n,1}=\left(
\begin{array}{cc}
-1-n(n+2\nu+2)&0\\
0&-n(n+2\nu+2)
\end{array}
\right),\quad n\geq0,
\end{equation}
and weight matrix
\begin{equation}\label{WWq}
W_1(y)=\frac{4^{\nu-1}(2+\nu)[y(1-y)]^{\nu-1/2}}{\nu+1/2}\left(
\begin{array}{cc}
1+\nu&0\\
0&\nu(1-2y)^2
\end{array}
\right).
\end{equation}
Observe that the independent coefficient of $D_1$ (depending on $y$) is the matrix of the infinitesimal operator of a continuous-time birth-and-death process with two states.

The second process is a regular diffusion process with a \emph{killing factor}, which infinitesimal operator is given by
\begin{equation}\label{DifOpB2}
D_2=y(1-y)\partial_y^2+(\nu+1/2)(1-2y)\partial_y-\D\frac{\nu(1-2y)^2}{2y(1-y)},\quad \nu\geq0,
\end{equation}
with eigenvalue
\begin{equation}\label{Eig32}
\lambda_{n,2}=-1-n(n+2\nu+2),
\end{equation}
and weight function
\begin{equation}\label{ww2}
w_2(y)=\frac{4^{\nu-1}(1+\nu)_2[y(1-y)]^{\nu-1/2}}{\nu+1/2}.
\end{equation}
Observe that the independent coefficient of $D_2$ (depending on $y$) is never positive, so it is the killing factor of a diffusion process.

We can perform again the spectral analysis of these two operators since we have an explicit expression of the weights, orthogonal functions and norms. Let us study the probabilistic properties of each one of these diffusion processes.

%\begin{enumerate}
%\item 
\medskip

\noindent {\bf (1)} Let $\{X_t, t\geq0\}$ be the diffusion process with killing associated with the infinitesimal operator \eqref{DifOpB2} and call $p(t;x,dy)$ the probability transition distribution of the process if it has not been killed yet. It is well known that $p(t;x,dy)$ has a density $p(t;x,y)$ and it is given by (see for instance Section 15.13 of \cite{KT2})
$$
p(t;x,y)=\sum_{n=0}^{\infty}e^{\lambda_{n,2}t}q_{n,2}(x)\overline{q_{n,2}(y)}\pi_nw_2(y),
$$
where $w_2(y)$ is given by \eqref{ww2}, the eigenvalue $\lambda_{n,2}$ is given by \eqref{Eig32} and $\pi_n$ are the inverse of the squared norms of the functions $q_{n,2}$ in \eqref{QQ}. The family  of functions $q_{n,2}$ can be written in the following way
$$
q_{n,2}(y)=-\frac{i\, n!\, \sqrt{y(1-y)}}{2^{n-2}(\nu+1)_n}  C_n^{(\nu+1)}(y),
$$
where $C_n^{(\lambda)}$ is the family of Gegenbauer polynomials, see \cite[(1.8.15)]{KoekS}. The norms with respect to \eqref{ww2} follows from the explicit expression \eqref{NormsG}, \eqref{eq:Qn_l=1_diffussion} and \eqref{RelNor2}:
\begin{equation}\label{norm2q}
\pi_n^{-1}=\|q_{n,2}^*\|_{w_2}^2=\frac{\pi n!(n+\nu+1)(\nu+1)_2\Gamma(n+2\nu+2)}{16^{n} 4^{\nu}(2\nu+1) \Gamma(n+\nu+2)^2}.
%\frac{\sqrt{\pi}(1+\nu)(2+\nu)n!\Gamma(\nu+\lfloor n/2\rfloor+3/2)}{4^{n+1}2^n(\nu+1/2)\Gamma(n+\nu+2)(\lceil n/2\rceil+\nu+2)_{\lfloor n/2\rfloor}}.
\end{equation} 
Therefore $p(t;x,y)$ can be written in the following way
\begin{align*}
p(t;x,y)&=e^{-t}\sqrt{x(1-x)}\frac{4^{\nu+1}(1+\nu)_2[y(1-y)]^{\nu}}{\nu+1/2}\sum_{n=0}^{\infty}\frac{e^{-n(n+2\nu+2)t} \, (n!)^2 \, \pi_n}{4^n \, (\nu+1)_n^2}\, C_n^{(\nu+1)}(x)C_n^{(\nu+1)}(y)\\
&=\frac{2}{\pi}e^{-t}\sqrt{x(1-x)}4^{2\nu+1}\Gamma(\nu+1)^2[y(1-y)]^{\nu}\times\\
&\hspace{3cm}\times\sum_{n=0}^{\infty}\frac{e^{-n(n+2\nu+2)t} \, n! \, 4^n(n+\nu+1)}{\Gamma(n+2\nu+1)}\, C_n^{(\nu+1)}(x)C_n^{(\nu+1)}(y).
\end{align*}
It is well known that the killing time $\xi$ is a random variable distributed according the law
$$
\mathbb{P}\left(\xi> t | \{X_s, s\geq0\}\right)=\mbox{exp}\left(-\D\frac{\nu}{2}\int_0^t\frac{(1-2X_s)^2}{X_s(1-X_s)}ds\right).
$$
Since we have an explicit expression for the transition probability density, we can approximate this distribution by doing
$$
\mathbb{P}\left(\xi> t | X_0=x\right)=\int_0^1p(t;x,y)dy.
$$

We observe that if the process $X_t$ is near the state $1/2$, then there is a small probability that the process is being killed. While if $X_t$ is near 0 or 1, then there is a very high probability that the process is being killed in a next time.

This process can be regarded as a Wright-Fisher model involving only mutation effects with killing. In this case the intensities of mutation are equal and the behavior of the boundary points can be analyzed in the way it is done in pp. 239 of \cite{KT2}. Therefore, since $\nu\geq0$, 0 (and 1) is a \emph{regular} boundary if $0\leq\nu<1/2$, while it is an \emph{entrance} boundary if $\nu\geq1/2$\footnote{We recall that a boundary is said to be \emph{regular} if the process can both enter and leave from the boundary, while it is said to be \emph{entrance} if the boundary cannot be reached from the interior of the state space, but it is possible to consider the process beginning there.}. In Figure \ref{DiffKill1} we can observe this behavior. The picture on the left has $\nu=1/4$, so the boundaries are regular. But when the process is close to 0 or 1, then almost immediately the process is killed. This is not the situation when $\nu=1$ where we have entrance boundaries. It takes more time for the process to be killed and the trajectories can not approach any of the boundary points.

\begin{figure}[h]
\begin{center}
\vspace{-0.4cm}
\includegraphics[height=6.7cm]{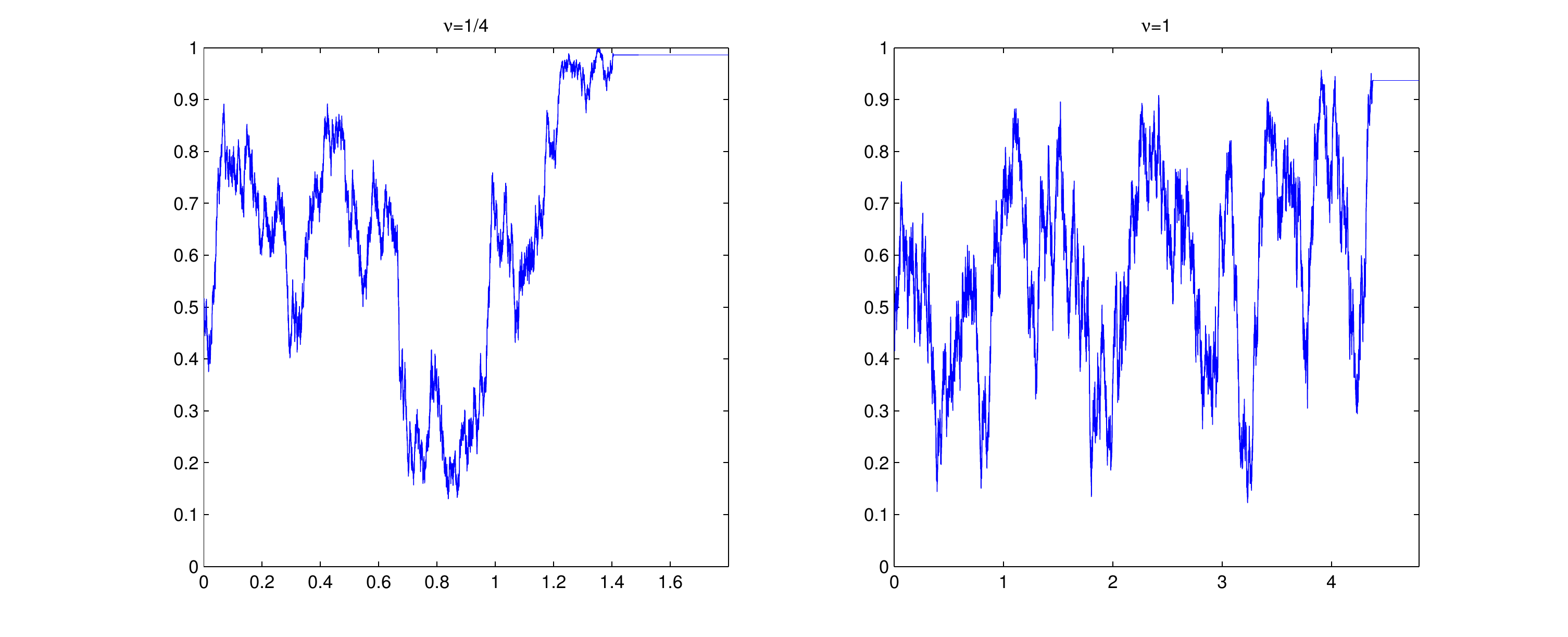}
\vspace{-1cm}
\end{center}
\caption{Trajectories of the diffusion with killing with parameters $\nu=1/4$ (regular boundaries) and $\nu=1$ (entrance boundaries) starting at $x=1/2$.}
\label{DiffKill1}
\end{figure}

\medskip
\noindent {\bf (2)} Let $\{Z_t=(X_t,Y_t), t\geq0\}$ be the switching diffusion process associated with the infinitesimal operator \eqref{DifOpB1}. Now the transition probability distribution is a $2\times2$ matrix-valued function $P(t;x,A)=(P_{ij}(t;x,A))$, defined for every $t\geq0, x\in[0,1]$ and any real Borel set $A$ of $[0,1]$, whose $(i,j)$ entry is given by
$$
P_{ij}(t;x,A)=\mathbb{P}\left(X_t\in A, Y_t=j | X_0=x, Y_0=i\right),\quad i,j\in\{1,2\}.
$$
The density of this matrix-valued distribution (in the sense that $0\leq P(t;x,A)\bm e_2\leq\bm e_2$, for any Borel set $A$) can be described in terms of the matrix-valued orthogonal functions $Q_{n,1}(y)$ in \eqref{QQ} with respect to $W_1(y)$  in \eqref{WWq} (see (3.8) of \cite{dI2}). Therefore
$$
P(t;x,y)=\sum_{n=0}^{\infty}Q_{n,1}(x)\Pi_ne^{\Lambda_{n,1}t}Q_{n,1}^*(y)W_1(y),
$$
where $\Lambda_{n,1}, n\geq0,$ are the (diagonal) eigenvalues \eqref{Eig31} and $\Pi_n^{-1},n\geq0,$ are the (diagonal) norms of the matrix-valued functions $Q_{n,1}(y)$, given by
$$
\Pi_n^{-1}=\|Q_{n,1}^*\|_{W_1}^2=\pi_n^{-1}\begin{pmatrix}1&0\\0&\D\frac{\nu(n+\nu+2)}{4(\nu+1)(n+\nu)}\end{pmatrix},
$$
where $\pi_n^{-1}$ was given by \eqref{norm2q}. It is possible to write $Q_{n,1}(y)$ in terms of the Gegenbauer polynomials (see \cite[Theorem 3.4]{KdlRR}).

The difference of this process with respect to the previous one is that their trajectories can evolve infinitely in time, while the first one has to stop at some random killing time. There are two phases in this process. In the first phase the diffusion evolves as a regular diffusion with infinitesimal operator (see entry (1,1) of $D_1$ in \eqref{DifOpB1})
$$
y(1-y)\partial_y^2+(\nu+1/2)(1-2y)\partial_y,
$$
while in the second phase the diffusion evolves as a regular diffusion with infinitesimal operator
$$
y(1-y)\partial_y^2+\left[(\nu+3/2)(1-2y)-\frac{1}{1-2y}\right]\partial_y.
$$
The description of how the process moves through the two phases is given by the independent coefficient of $D_1$:
\begin{equation}\label{QQ22}
\frac{1}{2y(1-y)}\left(
\begin{array}{cc}
-\nu(1-2y)^2&\nu(1-2y)^2\\
1+\nu&-(1+\nu)
\end{array}
\right).
\end{equation}

It is easy to see that the boundaries 0 and 1 behaves exactly in the same way as in the previous diffusion with killing, i.e. 0 and 1 are \emph{regular} boundaries if $0\leq\nu<1/2$, while they are \emph{entrance} boundaries if $\nu\geq1/2$. Therefore the process is positive recurrent for $\nu\geq1/2$. The important difference now is that in the second phase there is a point in the interior of $[0,1]$ given by $y=1/2$, where the drift coefficient tends to infinity. Therefore we should analyze the behavior of the process near this point (and only if the process is at phase 2). Using the same methodology to study the behavior of boundaries (see pp. 239 of \cite{KT2}) we conclude that the point $1/2$ (both on the left and on the right) is \emph{always} an \emph{entrance} boundary, meaning the the process cannot be reached from the interior of $[0,1/2)$ or $(1/2,1]$ (which depends on the position of the particle when the process starts at phase 2), but it is possible to consider the process beginning at $1/2$. 

This process can also be regarded as a variant of the Wright-Fisher model involving only mutation effects with two different phases. The intensities of mutation are equal and the behavior of the boundaries 0 and 1 in both phases is exactly the same, but, while the process is at phase 2, starting for instance at an interior point of $[0,1/2)$, then there is a force blocking the pass through the threshold located at $1/2$ (same if the interior point is located at $(1/2,1]$). If the process is at phase 1, it can move along the whole state space $[0,1]$ without any restriction at the point $1/2$. This behavior can be seen in Figure \ref{Diff1-2x2}. While the process is at phase 2 (left of the red vertical line) the trajectory is never going to cross the 1/2 horizontal line.

\begin{figure}[h]
\begin{center}
\vspace{-0.5cm}
\includegraphics[height=8.7cm]{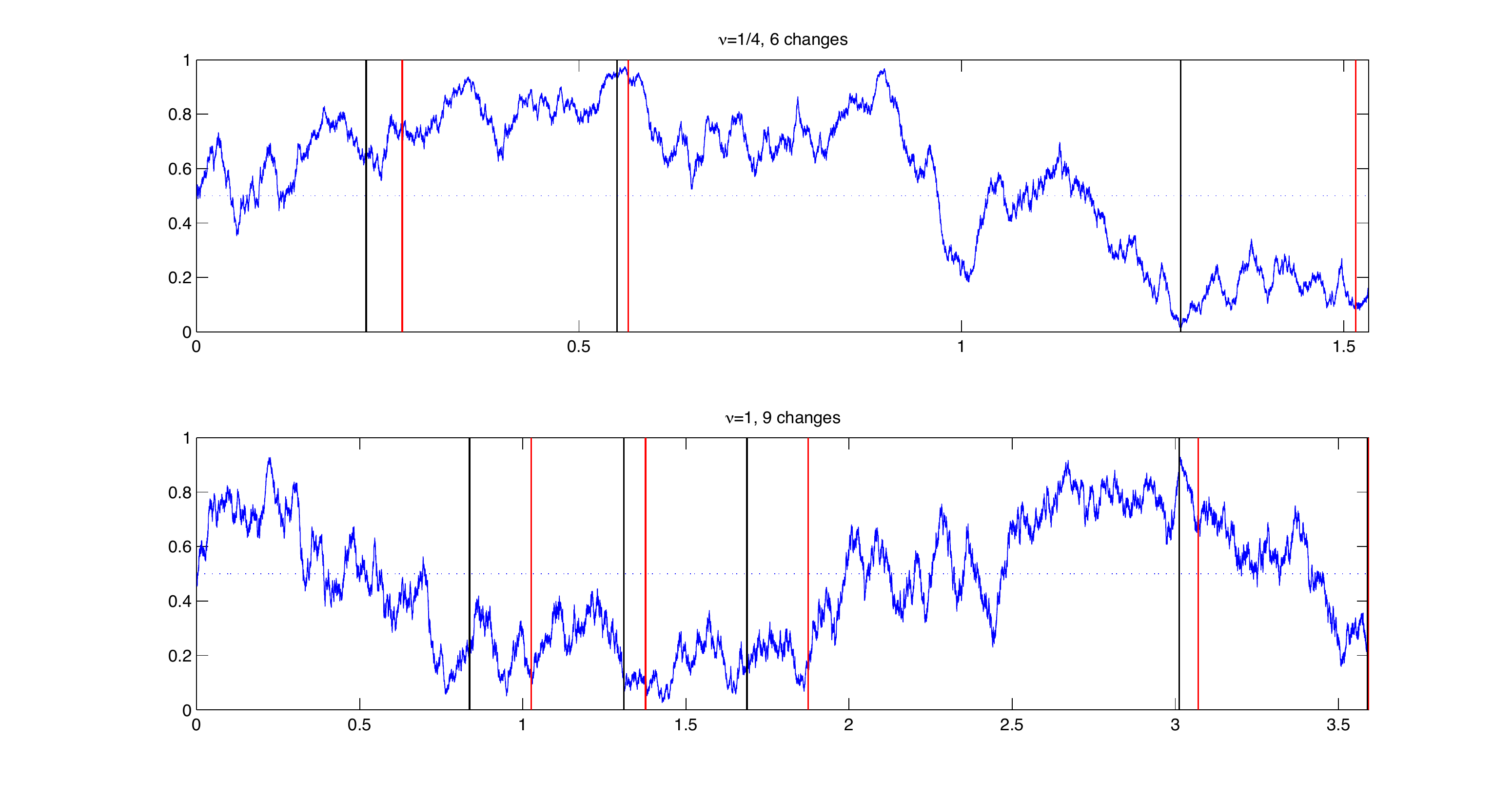}
\vspace{-1.1cm}
\end{center}
\caption{Trajectories of the diffusion with two phases with parameters $\nu=1/4$ (regular boundaries) and $\nu=1$ (entrance boundaries) starting at $y=1/2$ and phase 1. Phase 1 acts on the left of the black vertical line, while phase 2 acts on the left of the red vertical line.}
\label{Diff1-2x2}
\end{figure}

Let us study now how the process moves between the two phases. For that we need to study the matrix \eqref{QQ22} (which is the infinitesimal operator of a continuous-time Markov chain with state space $\{1,2\}$). We observe that  if the process is near 0 or 1, then the diagonal coefficients are very large, meaning that all phases are \emph{instantaneous}, i.e., the waiting times at each phase are very short until the process is far from the boundaries (see again Figure \ref{Diff1-2x2}). We also observe that if the process is near $1/2$ then the entry $(1,1)$ is very small, meaning that phase 1 is \emph{absorbing}, i.e., if the process enters this phase and the position of the particle is close to $1/2$, then it tends to spend long periods of times in that phase (as we can see again in Figure \ref{Diff1-2x2}). At the moment of jumping  from one phase to another, the probabilities are given by the law
\begin{align*}
\mathbb{P}(Y_t=1\to Y_t=2)&=\frac{\nu(1-2y)^2}{\nu(1-2y)^2+1+\nu},\\
\mathbb{P}(Y_t=2\to Y_t=1)&=\frac{1+\nu}{\nu(1-2y)^2+1+\nu}.
\end{align*}
A closer look at these probabilities shows that for all values of $y\in[0,1]$ and $\nu\geq0$ we have
$$
\mathbb{P}(Y_t=1\to Y_t=2)<\mathbb{P}(Y_t=2\to Y_t=1),
$$ 
so that the process tends to stay at phase 1 more time than in phase 2 (a behavior which can be seen again in Figure \ref{Diff1-2x2}).

We finally give an explicit expression of the vector-valued (of dimension 2) invariant distribution $\psi(y)$ given by formula (3.19) of \cite{dI2}, i.e.
\begin{equation*}
\psi(y)=\left(\int_0^1\bm e_2^*W_1(y)\bm e_2dy\right)^{-1}\bm e_2^*W_1(y),\quad \bm e_2^*=(1,1).
\end{equation*}
Since we have an explicit expression of $W_1(y)$ in \eqref{WWq}, we can compute explicitly $\psi(y)$, given in this case by
\begin{equation}\label{invdist}
\psi(y)=\frac{4^\nu\Gamma(\nu+2)[y(1-y)]^{\nu-1/2}}{\sqrt{\pi}(2+\nu)\Gamma(\nu+3/2)}\big(1+\nu\;,\;\nu(1-2y)^2\big).
\end{equation}

In Figure \ref{IDs} we have plotted both components (blue for the first component and red for the second) for the especial cases of $\nu=1/4$ and $\nu=1$. From these plots we clearly see that, for a large time, it is more likely that the process will be in phase 1 than in phase 2, as we previously predicted, especially near the point 1/2, where phase 1 is absorbing.

\begin{figure}[h]
\begin{center}
\vspace{-0.4cm}
\includegraphics[height=6.8cm]{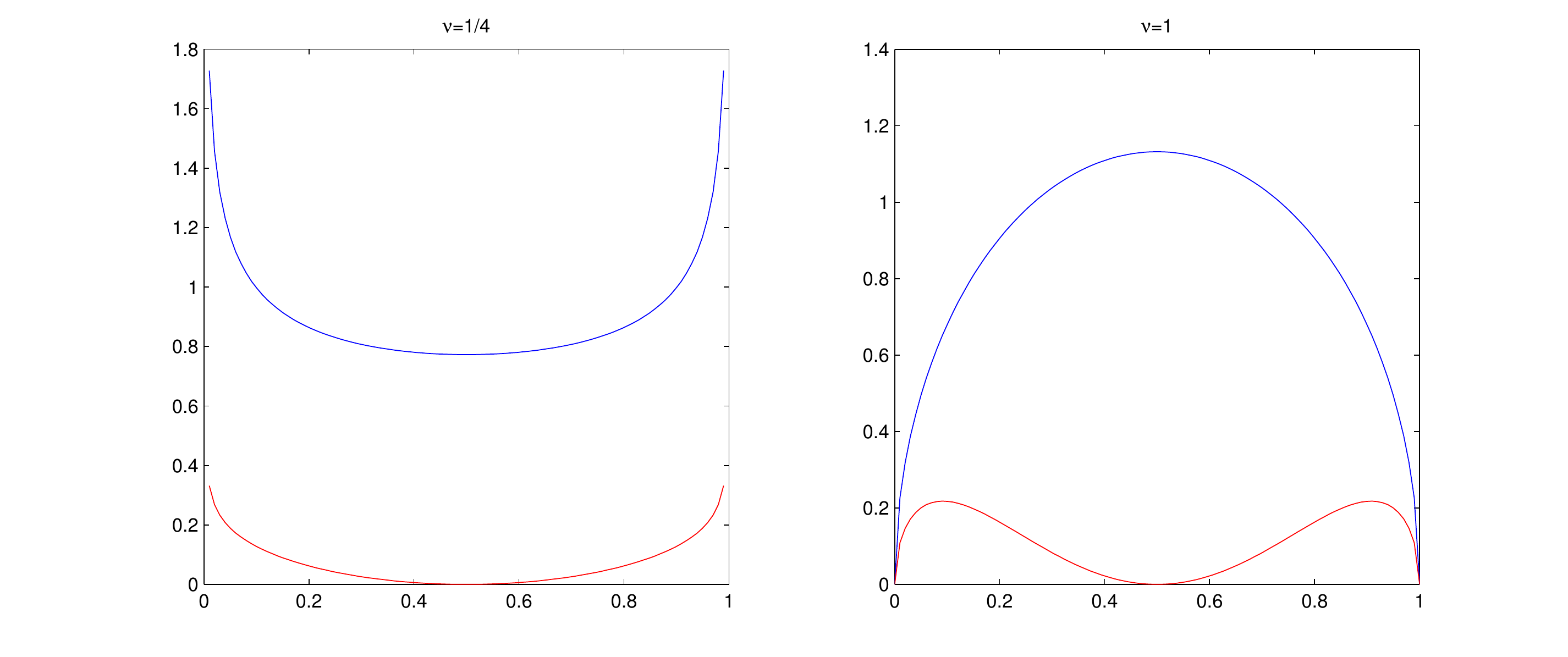}
\vspace{-1cm}
\end{center}
\caption{The components of the vector-valued invariant distribution $\psi(y)$ (in blue the first component and in red the second), for $\nu=1/4$ and $\nu=1$.}
\label{IDs}
\end{figure}

This vector-valued invariant distribution is valid only when the process is positive recurrent, i.e. $\nu\geq1/2$. For $0\leq\nu<1/2$, \eqref{invdist} is also meaningful, but the boundary points of the process are now absorbing, meaning that the correct vector-valued invariant distribution of such cases involves mass jumps at the boundaries 0 and 1 plus a density portion of the form \eqref{invdist}.
%\end{enumerate}

\section{The $\ell=2$ case}\label{SEC4}

For the $\ell=2$ case, the matrix $Y$ in \eqref{YY} is given by
\begin{equation*}\label{YY5}
Y=\frac{1}{\sqrt{2}}\begin{pmatrix} 1&0&0&0&1 \\ 0&1&0&1&0 \\0&0&\sqrt{2}&0&0\\0&-1 &0&1&0\\-1&0 &0&0&1 \end{pmatrix}.
\end{equation*}
The weight matrix $W$ and the matrix-valued orthogonal polynomials can now be divided into two examples of size $3\times3$ and $2\times2$, respectively. As in the previous case, we can study two different stochastic models. The first one comes from the coefficients of the three-term recurrence relations, in which case we will have two \emph{continuous-time} level-dependent quasi-birth-and-death processes (with 3 and 2 phases, respectively). These processes are similar to the ones studied in the previous section, i.e. they are rational variations of one-server queues where the interaction between them is remarkable in the first states of the queue. So we will not give any details in this section.

More remarkable is the situation in relation with switching diffusion processes. The conjugation given by the matrix $S(y)$ in \eqref{SS} (see also \eqref{eq:Psi0_k_y} and \eqref{eq:coef_ch}) is
$$
S(y)=\begin{pmatrix} 1 & 0& 0&0&0\\0&1-2y&0&0&0\\0&0&1-\frac{8}{3}y(1-y)&0&0\\0&0&0&1-2y&0\\0&0&0&0&1\end{pmatrix}.
$$
The transformation matrix $T$ is given as in the $\ell=1$ case in \eqref{TTT}.

The first process is a switching diffusion process with state space $[0,1]\times\{1,2,3\}$. The infinitesimal operator is given by

\begin{align*}
D_1=&y(1-y)\partial_y^2\\
&+\left(
\begin{array}{ccc}
(\nu+1/2)(1-2y)&0&0\\
0&(\nu+3/2)(1-2y)-\frac{1}{1-2y}&0\\
0&0&(\nu+5/2)(1-2y)-\frac{6(1-2y)}{3-8y+8y^2}
\end{array}
\right)\partial_y\\
&+\frac{1}{y(1-y)}\left(
\begin{array}{ccc}
-\nu(1-2y)^2&\nu(1-2y)^2&0\\
\frac{3+\nu}{4}&\frac{3+\nu+(1+\nu)(3-8y+8y^2)}{4}&\frac{(1+\nu)(3-8y+8y^2)}{4}\\
0&\frac{3(1-2y)^2}{3-8y+8y^2}&-\frac{3(1-2y)^2}{3-8y+8y^2}
\end{array}
\right),
\end{align*}
with eigenvalue
\begin{equation*}
\Lambda_{n,1}=\left(
\begin{array}{ccc}
-4-n(n+2\nu+4)&0&0\\
0&-1-n(n+2\nu+4)&0\\
0&0&-n(n+2\nu+4)
\end{array}
\right),\quad n\geq0,
\end{equation*}
and weight matrix
\begin{equation*}
W_1(y)=\frac{4^{\nu-2}(\nu+4)[y(1-y)]^{\nu-1/2}}{(\nu+1/2)_2}\left(
\begin{array}{ccc}
(\nu+2)_2&0&0\\
0&4\nu(\nu+2)(1-2y)^2&0\\
0&0&\frac{\nu(\nu+1)(3-8y+8y^2)^2}{3}
\end{array}
\right).
\end{equation*}
Observe that the term $3-8y+8y^2$ is always positive for any $y$. This process may be viewed as an extension of the example studied in Section \ref{S32} (2), but with three different phases. The probabilistic interpretation is very similar and we can study without too much difference the behavior at the boundaries points (including the behavior at the point $y=1/2$ in phase 2), how the process moves between phases and the invariant distribution.

For the second process we have a new phenomenon. We have a switching diffusion process \emph{with killing} with state space $[0,1]\times\{1,2\}$. The infinitesimal operator is given by
\begin{align*}
D_2=y(1-y)\partial_y^2+&\left(
\begin{array}{cc}
(\nu+3/2)(1-2y)-\D\frac{1}{1-2y}&0\\
0&(\nu+1/2)(1-2y)
\end{array}
\right)\partial_y\\
&+\frac{1}{y(1-y)}\left(
\begin{array}{cc}
-\D\frac{\nu+3}{4}-\D\frac{(\nu+1)(3-8y+8y^2)}{4}&\D\frac{\nu+3}{4}\\
\nu(1-2y)^2&-\nu(1-2y)^2
\end{array}
\right),
\end{align*}
with eigenvalue
\begin{equation*}
\Lambda_{n,2}=\left(
\begin{array}{cc}
-1-n(n+2\nu+4)&0\\
0&-4-n(n+2\nu+4)
\end{array}
\right),\quad n\geq0,
\end{equation*}
and weight matrix
\begin{equation*}
W_2(y)=\frac{4^{\nu-2}(\nu+2)(\nu+4)[y(1-y)]^{\nu-1/2}}{(\nu+1/2)_2}\left(
\begin{array}{cc}
4\nu(1-2y)^2&0\\
0&\nu+3
\end{array}
\right).
\end{equation*}

The difference of this process with respect to the previous one is that in the first phase the process can be stopped at some random killing time, so the diffusion runs according to the infinitesimal operator 
\begin{equation}\label{2kill}
y(1-y)\partial_y^2+\left[(\nu+3/2)(1-2y)-\frac{1}{1-2y}\right]\partial_y-\frac{(\nu+1)(3-8y+8y^2)}{4}.
\end{equation}
The second phase runs as a regular diffusion with infinitesimal operator
$$
y(1-y)\partial_y^2+(\nu+1/2)(1-2y)\partial_y.
$$
The description of how the process moves through the two phases is given by
\begin{equation*}
\frac{1}{y(1-y)}\left(
\begin{array}{cc}
-\D\frac{\nu+3}{4}&\D\frac{\nu+3}{4}\\
\nu(1-2y)^2&-\nu(1-2y)^2
\end{array}
\right).
\end{equation*}

\begin{figure}[t]
\begin{center}
\vspace{-0.3cm}
\includegraphics[height=8.8cm]{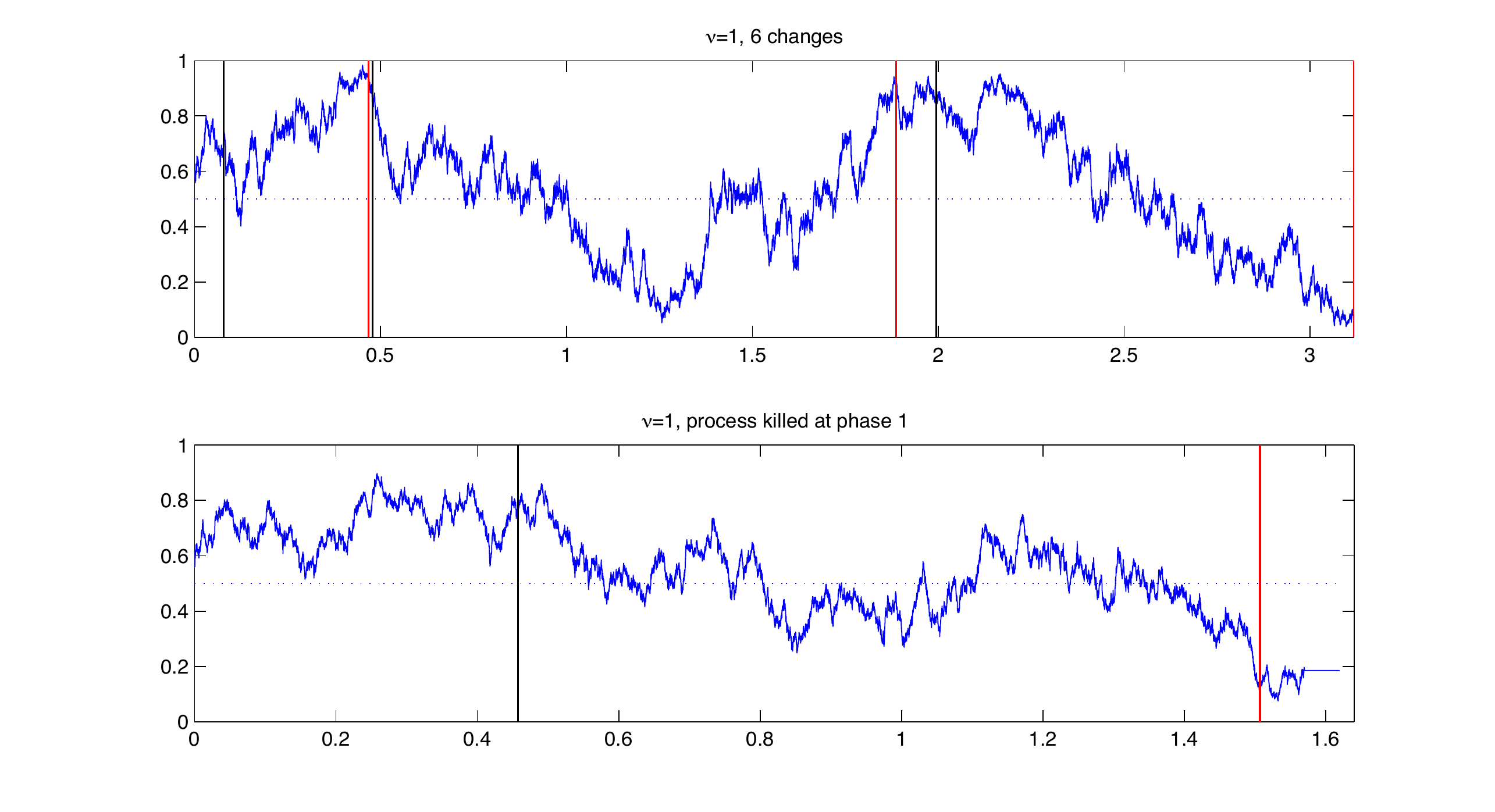}
\vspace{-1.1cm}
\end{center}
\caption{Trajectories of the diffusion with two phases with parameters $\nu=1$ starting at $y=3/5$ and phase 1. Phase 1 acts on the left of the black vertical line, while phase 2 acts on the left of the red vertical line.}
\label{SDPK}
\end{figure}

This process can be regarded as a variant of the Wright-Fisher model involving only mutation effects with two different phases, one of them with a killing factor. The behavior of the boundaries 0 and 1 in both phases is exactly the same, but, while the process is at phase 1, starting for instance at an interior point of $[0,1/2)$, then there is a force blocking the pass through the threshold located at $1/2$ (same if the interior point is located at $(1/2,1]$). Also in this phase the process may terminate according to the killing coefficient given in \eqref{2kill} (see second picture of Figure \ref{SDPK}). If the process is at phase 2, it can move along the whole state space $[0,1]$ without any restriction at the point $1/2$ or being killed (see again Figure \ref{SDPK}). As far as the authors know this is the \emph{first example} of this kind that can be studied explicitly using spectral analysis of the infinitesimal operator.

%\section{Conclusions}
%In this paper we have studied some bivariate Markov process arising from group representation theory. 

\end{document}